\date{\today}
\let\oldsection\section
\renewcommand\section{\setcounter{equation}{0}\oldsection}
\newtheorem{corollary}{Corollary}[section]
\newtheorem{theorem}{Theorem}[section]
\newtheorem{proposition}{Proposition}[section]
\newtheorem{remark}{Remark}[section]
\begin{document}

\title[Global solutions of compressible Navier-Stokes equations with vacuum]{Global small solutions of heat conductive compressible Navier-Stokes equations with vacuum: smallness on scaling invariant quantity}



\author{Jinkai~Li}
\address[Jinkai~Li]{South China Research Center for Applied Mathematics and
                Interdisciplinary Studies, South China Normal University,
                Zhong Shan Avenue West 55, Tianhe District, Guangzhou
                510631, China}
\email{jklimath@m.scnu.edu.cn; jklimath@gmail.com}



\keywords{Heat conductive compressible Navier-Stokes equations; global well-posedness; strong solutions.}
\subjclass[2010]{35A01, 35Q30, 35Q35, 76N10.}


\begin{abstract}
In this paper, we consider the Cauchy problem to the heat conductive compressible Navier-Stokes equations in the presence of vacuum and with vacuum far field.
Global well-posedness of strong solutions
is established under the assumption, among some other regularity and compatibility  conditions, that the scaling invariant quantity $\|\rho_0\|_\infty(\|\rho_0\|_3+\|\rho_0\|_\infty^2\|\sqrt{\rho_0}u_0\|_2^2)(\|\nabla u_0\|_2^2+
  \|\rho_0\|_\infty\|\sqrt{\rho_0}E_0\|_2^2)$ is sufficiently small, with the smallness depending only on the parameters $R, \gamma, \mu, \lambda,$ and $\kappa$ in the system. The total mass can be either finite or infinite.
\end{abstract}

\maketitle

\allowdisplaybreaks
\section{Introduction}
In this paper, we consider the following heat conductive compressible Navier-Stokes equations for the ideal gas:
\begin{eqnarray}
  \partial_t\rho+\text{div}\,(\rho u)=0, \label{EQrho}\\
  \rho(\partial_tu+(u\cdot\nabla)u)-\mu\Delta u-(\mu+\lambda)\nabla\text{div}\, u+\nabla p=0,\label{EQu}\\
  c_v\rho(\partial_t\theta+u\cdot\nabla\theta)+p\text{div}\,u-\kappa\Delta\theta=\mathcal Q(\nabla u),\label{EQtheta}
\end{eqnarray}
in $\mathbb R^3\times(0,\infty)$, where the unknowns $\rho\geq0, u\in\mathbb R^3,$ and $\theta\geq0$, respectively, represent the density, velocity, and absolute temperature, $p=R\rho\theta$, with positive constant $R$, is the pressure, $c_v>0$ is a constant,
constants $\mu$ and $\lambda$ are the bulk and shear viscous coefficients, respectively, positive constant
$\kappa$ is the heat conductive coefficient, and
$$\mathcal Q(\nabla u)=\frac\mu2|\nabla u+(\nabla u)^t|^2+\lambda(\text{div}\,u)^2,$$
with $(\nabla u)^t$ being the transpose of $\nabla u$. The viscous coefficients $\mu$ and $\lambda$ satisfy the physical constraints
$$\mu>0,\quad 2\mu+3\lambda\geq0.$$
The additional assumption $2\mu>\lambda$ will also be use in this paper.

Due to their fundamental importance in the fluid dynamics, extensive studies have been carried out and many developments have been achieved
on the compressible Navier-Stokes equations in the last seventy years. The mathematical studies on the compressible Navier-Stokes equations
started with the uniqueness results
by Graffi \cite{GRAFFI53} in 1953 for barotropic fluid and by Serrin \cite{SERRIN59} in 1959 for general fluids,
and the local existence result by Nash \cite{NASH62} in 1962 for the Cauchy problem. Since then, comprehensive mathematical
theories have been developed for the compressible Navier-Stokes equations.

The mathematical theory for the compressible Navier-Stokes equations in 1D is satisfied and, in particular,
the corresponding global well-posedness, for arbitrary large initial data, and the
initial density can either be uniformly positive or only nonnegative (that is, it can vanish on some subset of the domain).
For the case that the initial density is uniformly positive, the global well-posedness of strong solutions, with large initial data,
was first proved in \cite{KANEL68}, for the isentropic case, and later in \cite{KAZHIKOV77}, for the general case,
and the corresponding large time behavior was recently proved in \cite{LILIANG16}, see also \cite{KAZHIKOV82,ZLOAMO97,ZLOAMO98,CHEHOFTRI00,JIAZLO04} for some
related results. For the case that the initial density contains vacuum, the corresponding global well-posedness of strong solutions
was recently proved by the author and his collaborator, see \cite{LJK18,LIXIN17,LIXIN19}.

Compared with the one dimensional case, the mathematical theory for the multi-dimensional case is far from satisfied and, in particular, some basic problems such
as the global existence of strong solutions and uniqueness of weak solution are still unknown.
For the case that the initial density is uniformly positive, the local well-posedness was proved long time ago,
see \cite{NASH62,ITAYA71,VOLHUD72,TANI77,VALLI82,LUKAS84} and, in particular, the inflow and outflow were allowed in \cite{LUKAS84}; however, the
general global well-posedness is still unknown. Global well-posedness of strong solutions with small initial data was first proved in
\cite{MATNIS80,MATNIS81,MATNIS82,MATNIS83}, and later further developed in many papers, see, e.g., \cite{PONCE85,VALZAJ86,DECK92,HOFF97,KOBSHI99,DANCHI01,CHENMIAOZHANG10,CHIDAN15,DANXU18,FZZ18}.
For the case that the initial density allows vacuum, global existence of weak solutions was first proved
in \cite{LIONS93,LIONS98}, see \cite{FEIREISL01,JIAZHA03,FEIREISL04P,FEIREISL04B,BRESCH18} for further developments, but the
uniqueness is still an open problem.
Local well-posedness of strong solutions was proved in \cite{CHOKIM04,CHOKIM06-1,CHOKIM06-2}, and the global well-posedness, with small initial data,
was proved in \cite{HLX12}, and see \cite{LIXIN13,HUANGLI11,WENZHU17} for further developments.

The aim of this paper is to establish the global existence of strong solutions to the Cauchy problem of (\ref{EQrho})--(\ref{EQtheta}), under some smallness assumptions on the initial data,
in the presence of initial vacuum, and with vacuum far field. The main novelty of this paper is that
the smallness assumption is imposed on some quantities that are
scaling invariant with respect to the following scaling transformation:
\begin{equation}
\label{SCAL}
(\rho_{0\lambda}(x), u_{0\lambda}(x), \theta_{0\lambda}(x))=(\rho_0(\lambda x), \lambda u_0(\lambda x), \lambda^2\theta_0(\lambda x)), \quad\forall\lambda\not=0.
\end{equation}
This scaling transformation on the initial data inheres in the following natural scaling invariant property
of system (\ref{EQrho})--(\ref{EQtheta}):
$$
\rho_\lambda(x,t)=\rho(\lambda x,\lambda^2t),\quad u_\lambda(x,t)=\lambda u(\lambda x,\lambda^2t),\quad\theta_\lambda(x,t)=\lambda^2\theta(\lambda x,\lambda^2t),
$$
that is, if $(\rho, u, \theta)$ is a solution, with initial data
$(\rho_0, u_0, \theta_0)$, then $(\rho_\lambda, u_\lambda, \theta_\lambda)$
is also a solution, for any nonzero $\lambda$, but with initial data $(\rho_{0\lambda}, u_{0\lambda}, \theta_{0\lambda})$.

The reason for us to focus on the
smallness assumptions on the scaling invariant quantities, rather than on those not, is the following fact: if assuming that $\mathscr M$ is a functional, satisfying
$$
\mathscr M(\rho_{0\lambda}, u_{0\lambda}, \theta_{0\lambda})=\lambda^\ell (\rho_0, u_0, \theta_0), \quad\forall\lambda\not=0,\quad\mbox{for some constant }\ell\not=0,
$$
and that the global well-posedness holds, for any initial data
$(\rho_0, u_0, \theta_0)$, such that $\mathscr M(\rho_0, u_0, \theta_0)\leq\varepsilon_0$, for some $\varepsilon_0>0$
depending only on the parameters of the system, then, by suitably choosing the scaling parameter $\lambda$, one can show that the system is actually globally well-posed, for arbitrary large initial data; however, this global well-posedness for
arbitrary large initial data is far from what we already known.

Before stating the main results, we first clarify some necessary notations
being used throughout this paper.
For $1\leq q\leq\infty$ and positive integer $m$, we use $L^q=L^q(\mathbb R^3)$ and $W^{1,q}=W^{m,q}(\mathbb R^3)$ to denote the standard Lebesgue and Sobolev spaces, respectively, and in the case that $q=2$, we use $H^m$
instead of $W^{m,2}$. For simplicity, we also use notations $L^q$ and
$H^m$ to denote the $N$ product spaces $(L^q)^N$ and $(H^m)^N$, respectively.
We always use $\|u\|_q$ to denote the $L^q$ norm of $u$. For shortening the
expressions, we sometimes use $\|(f_1,f_2,\cdots,f_n)\|_X$ to denote the sum
$\sum_{i=1}^N\|f_i\|_X$ or its equivalent norm $\left(\sum_{i=1}^N\|f_i\|_X^2
\right)^{\frac12}$. We denote
$$D^{k,r}=\Big\{ u\in L^1_{loc}(\mathbb{R}^3)\,\Big|\,\|\nabla^k u\|_r<\infty\Big\}, \quad D^k=D^{k,2},$$
$$D^{1}_0=\Big\{ u\in L^6\,\Big|\,\|\nabla u\|_2<\infty \Big\}.$$
For simplicity of notations, we adopt the notation
$$
\int fdx=\int_{\mathbb R^3}fdx.
$$

We are now ready to state the main result of this paper.

\begin{theorem}
\label{thm}
Assume $2\mu>\lambda$ and let $q\in(3,6]$ be a fixed constant. Assume that the initial data $(\rho_0, u_0, \theta_0)$ satisfies
\begin{eqnarray*}
  \rho_0,\theta_0\geq0,\quad\rho\leq\bar\rho,\quad \rho_0\in H^1\cap W^{1,q}, \quad\sqrt{\rho_0}\theta_0\in L^2, \quad (u_0, \theta_0)\in D_0^1\cap D^2, \\
  -\mu\Delta u_0-(\mu+\lambda)\nabla\text{div}\,u_0+\nabla p_0=\sqrt{\rho_0}g_1, \quad\kappa\Delta\theta_0+\mathcal Q(\nabla u)=\sqrt{\rho_0}g_2,
\end{eqnarray*}
for a positive constant $\bar\rho$ and some $(g_1, g_2)\in L^2$, where $p_0=R\rho_0\theta_0$.

Then, there is a positive number $\varepsilon_0$ depending only on $R,\gamma,\mu,\lambda,$ and $\kappa$, such that system (\ref{EQrho})--(\ref{EQtheta}), with initial data $(\rho_0, u_0, \theta_0)$, has a unique global solution $(\rho, u, \theta)$, satisfying
\begin{eqnarray*}
  &\rho\in C([0,\infty); H^1\cap W^{1,q}),\quad\rho_t\in C([0,\infty) L^2\cap L^q),\\
  &(u,\theta)\in C([0,\infty); D_0^1\cap D^2)\cap L^2_{\text{loc}}([0,\infty); D^{2,q}),\\
  &(u_t,\theta_t)\in L^2_{\text{loc}}([0,\infty); D_0^1),\quad (\sqrt\rho u_t,\sqrt\rho\theta_t)\in L^\infty_{\text{loc}}([0,\infty); L^2),
\end{eqnarray*}
provided
$$
\mathscr N_0:=\bar\rho(\|\rho_0\|_3+\bar\rho^2\|\sqrt{\rho_0}u_0\|_2^2)(\|\nabla u_0\|_2^2+
  \bar\rho\|\sqrt{\rho_0}E_0\|_2^2)\leq\varepsilon_0.
$$
\end{theorem}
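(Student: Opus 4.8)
The plan is to combine the known local well-posedness theory for strong solutions allowing vacuum (of Cho--Kim type) with a single \emph{scaling critical} a priori estimate that closes precisely when $\mathscr N_0$ is small, and then to bootstrap the remaining regularity and prove uniqueness. First I would invoke the local theory to obtain, under the stated regularity and compatibility conditions, a maximal existence time $T_{\max}\in(0,\infty]$, a unique strong solution on $[0,T_{\max})$ in the class of the theorem, and a blow-up criterion at $T_{\max}$ if $T_{\max}<\infty$; it then suffices to show that, when $\mathscr N_0\le\varepsilon_0$, the solution satisfies bounds on every finite time interval depending only on $\mathscr N_0$ and $R,\gamma,\mu,\lambda,\kappa$ --- in particular $\rho\le 2\bar\rho$ --- which forces $T_{\max}=\infty$. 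Two elementary inputs are the propagation of $\theta\ge0$ from $\theta_0\ge0$ by the maximum principle for (\ref{EQtheta}) (using $\mathcal Q(\nabla u)\ge0$) and, by testing (\ref{EQu}) by $u$, (\ref{EQtheta}) by $1$, adding, and using (\ref{EQrho}) together with $\int\mathcal Q(\nabla u)\,dx=\mu\|\nabla u\|_2^2+(\mu+\lambda)\|\text{div}\,u\|_2^2$, conservation of the total energy $\int\rho E\,dx=\int\rho_0 E_0\,dx$ with $E:=\tfrac12|u|^2+c_v\theta$, which already controls $\|\sqrt\rho u(t)\|_2$ and $\|\sqrt\rho\theta(t)\|_1$.

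The core step is the scaling critical a priori estimate. Writing $\dot f:=f_t+u\cdot\nabla f$ and $G:=(2\mu+\lambda)\,\text{div}\,u-p$ for the effective viscous flux (so $\Delta G=\text{div}(\rho\dot u)$), I would introduce a lower-order quantity $\mathcal B(t):=\|\rho(t)\|_3+\bar\rho^2\|\sqrt\rho u(t)\|_2^2$ and the main functional
\[
\mathcal A(T):=\sup_{0\le t\le T}\Big(\|\nabla u\|_2^2+\bar\rho\|\sqrt\rho E\|_2^2\Big)+\int_0^T\Big(\tfrac1{\bar\rho}\|\nabla^2u\|_2^2+\|\sqrt\rho\dot u\|_2^2+\tfrac{\kappa}{\bar\rho}\|\nabla^2\theta\|_2^2+\|\sqrt\rho\dot\theta\|_2^2\Big)\,dt,
\]
noting that $\mathcal B(0)=\|\rho_0\|_3+\bar\rho^2\|\sqrt{\rho_0}u_0\|_2^2$ and $\mathcal A(0)\lesssim\|\nabla u_0\|_2^2+\bar\rho\|\sqrt{\rho_0}E_0\|_2^2$ by the compatibility conditions (which bound $\|\sqrt{\rho_0}\dot u(0)\|_2\le\|g_1\|_2$ and similarly for $\theta$), so that $\mathscr N_0=\bar\rho\,\mathcal B(0)\,\mathcal A(0)$ up to a harmless constant. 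Then I would test (\ref{EQu}) by $\dot u$ and (\ref{EQtheta}) by $\dot\theta$, use the equation for $\tfrac12|u|^2$ to absorb the kinetic part of $\|\sqrt\rho E\|_2^2$, and bound every resulting pressure, transport, and commutator term by Gagliardo--Nirenberg inequalities together with the elliptic estimates $\|\nabla^2u\|_2\le C(\|\rho\dot u\|_2+\|\nabla p\|_2)$ for the Lam\'e operator, $\|\nabla^2\theta\|_2\le C\kappa^{-1}(\|\rho\dot\theta\|_2+\|p\,\text{div}\,u\|_2+\|\mathcal Q(\nabla u)\|_2)$, and $\|\nabla G\|_2\le C\|\rho\dot u\|_2$. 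Tracking powers of $\bar\rho$ and of the norms, all destabilizing contributions assemble into products of the form $\bar\rho\,\mathcal B(t)\,\mathcal A(t)$; this matching is forced by the invariance of $\mathscr N_0$ under (\ref{SCAL}), since $\mathcal B_\lambda=\lambda^{-1}\mathcal B$, $\mathcal A_\lambda=\lambda\,\mathcal A$, and $\bar\rho$ is scaling invariant. The outcome, schematically, is
\[
\mathcal A(T)\le C\big(\|\nabla u_0\|_2^2+\bar\rho\|\sqrt{\rho_0}E_0\|_2^2\big)+C\,\bar\rho\,\mathcal B(T)\,\mathcal A(T)^2,\qquad \mathcal B(T)\le\mathcal B(0)\exp\!\Big(C\!\int_0^T\!\|\text{div}\,u\|_\infty\,dt\Big),
\]
with $\int_0^T\|\text{div}\,u\|_\infty\,dt$ itself controlled by a small power of $\bar\rho\,\mathcal B(T)\,\mathcal A(T)$ via $\|\text{div}\,u\|_\infty\le(2\mu+\lambda)^{-1}(\|G\|_\infty+\|p\|_\infty)$ and interpolation of $G$ through $\|\nabla G\|_2$ and $\|\nabla^2G\|_2$ (time weights near $t=0$ and decay of $\mathcal A$ as $t\to\infty$ make this integral finite). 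Choosing $\varepsilon_0$ small, a continuity/bootstrap argument then yields, for all $T$, $\mathcal A(T)\lesssim\|\nabla u_0\|_2^2+\bar\rho\|\sqrt{\rho_0}E_0\|_2^2$, $\mathcal B(T)\le 2\mathcal B(0)$, and $\rho\le 2\bar\rho$.

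With these global bounds in hand, I would obtain the remaining regularity from a further, time-weighted hierarchy of estimates: differentiating (\ref{EQu}) and (\ref{EQtheta}) in $t$ and testing by $u_t,\theta_t$ yields $(\sqrt\rho u_t,\sqrt\rho\theta_t)\in L^\infty_{\text{loc}}([0,\infty);L^2)$ and $(u_t,\theta_t)\in L^2_{\text{loc}}([0,\infty);D^1_0)$ (the compatibility conditions start these at $t=0$); transport estimates for (\ref{EQrho}) along the flow of $u$ give $\rho\in C([0,\infty);H^1\cap W^{1,q})$ and $\rho_t\in C([0,\infty);L^2\cap L^q)$; and $W^{2,q}$ regularity for the Lam\'e and heat operators gives $(u,\theta)\in L^2_{\text{loc}}([0,\infty);D^{2,q})$. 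Uniqueness would follow from a standard energy estimate for the difference of two solutions with the same data, closed by Gronwall using $\rho\le 2\bar\rho$ and $(u,\theta)\in L^\infty_{\text{loc}}(D^1_0\cap D^2)$.

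The hard part will be the a priori estimate of the second step: because of the vacuum one may only propagate $\sqrt\rho$-weighted material-derivative norms and must recover genuine second derivatives through elliptic regularity, and the delicate work is to organize \emph{all} nonlinear and commutator terms so that the destabilizing ones collect exactly into $\bar\rho\,\mathcal B(t)\,\mathcal A(t)$ --- the scaling invariant combination whose initial value is $\mathscr N_0$ --- so that smallness of $\mathscr N_0$ alone (no smallness on individual norms, and with possibly infinite total mass $\int\rho_0\,dx$) closes the bootstrap. Closely tied to this, and also delicate, is the global-in-time control of $\|\rho\|_\infty$, i.e.\ of $\int_0^\infty\|\text{div}\,u\|_\infty\,dt$, through the effective viscous flux together with time-weighted and large-time decay estimates for the higher-order energies.
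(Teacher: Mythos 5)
Your overall skeleton (local existence, a scaling-critical a priori estimate closed by a continuity argument, then the Serrin-type blow-up criterion) matches the paper, and your choice of the quantities $\mathcal B=\|\rho\|_3+\bar\rho^2\|\sqrt\rho u\|_2^2$ and $\sup(\|\nabla u\|_2^2+\bar\rho\|\sqrt\rho E\|_2^2)$ is exactly right. But the mechanism you propose for closing the density part of the estimate has a genuine gap. You control $\mathcal B(T)$, i.e.\ $\|\rho\|_3$, and implicitly $\|\rho\|_\infty$, through $\exp\bigl(C\int_0^T\|\text{div}\,u\|_\infty\,dt\bigr)$, and you propose to bound this integral via $\|\text{div}\,u\|_\infty\le(2\mu+\lambda)^{-1}(\|G\|_\infty+\|p\|_\infty)$ plus ``time weights near $t=0$ and decay of $\mathcal A$ as $t\to\infty$.'' This is precisely the step that does not go through here. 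First, $\|p\|_\infty=R\|\rho\theta\|_\infty$ requires a pointwise bound on $\theta$, which none of your functionals control and which is a notoriously hard quantity for the full system. Second, the decay of $\mathcal A$ you invoke to make $\int_0^\infty\|\text{div}\,u\|_\infty\,dt$ finite is not available: as the paper emphasizes, for the heat-conductive system the basic energy identity gives \emph{no} dissipation, and with vacuum far field and possibly infinite mass the entropy-based substitutes of Huang--Li and Wen--Zhu are also unavailable; so the large-time integrability of $\|\text{div}\,u\|_\infty$ cannot be assumed as an input to the very bootstrap that is supposed to produce it. Third, even granting the integral is finite, an $e^{C}$ growth factor with $C$ of order one destroys the requirement $\mathcal B(T)\le 2\mathcal B(0)$ that your closure $\mathcal A\le C\mathcal A(0)+C\bar\rho\,\mathcal B\,\mathcal A^2$ needs; you would need the integral to be \emph{small}, a much stronger claim. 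Your appeal to scaling invariance to argue that all bad terms ``must'' assemble into $\bar\rho\,\mathcal B\,\mathcal A$ is only a dimensional consistency check, not an argument: there are other scaling-invariant quantities (e.g.\ $\int_0^T\|\text{div}\,u\|_\infty\,dt$ itself, or anything involving $\|\theta\|_\infty$) that are not controlled by $\mathscr N$.

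The paper's way around this is its main novelty and is absent from your proposal. Applying $\Delta^{-1}\text{div}$ to the momentum equation gives $(2\mu+\lambda)\text{div}\,u=p+\Delta^{-1}\text{div}\,(\rho u)_t+\Delta^{-1}\text{div}\,\text{div}\,(\rho u\otimes u)$; multiplying by $\rho^3$ and combining with the renormalized continuity equation yields a closed evolution for $\int\rho^3\,dx$ (Proposition \ref{proprho3}) in which the dangerous term is $\sup_t\|\rho\|_\infty^{2/3}\|\sqrt\rho u\|_2^{1/3}\|\sqrt\rho|u|^2\|_2^{1/3}\|\rho\|_3^3$ plus $\int\|\rho\|_\infty^2\|\rho\|_3^2\|\nabla u\|_2^2\,dt$ --- all energy-level quantities, with no appeal to $\|\text{div}\,u\|_\infty$. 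Likewise $\|\rho\|_\infty$ is bounded (Proposition \ref{proprhobdd}) by integrating $(2\mu+\lambda)\frac{d}{dt}\log\rho+p=[u,\mathcal R\otimes\mathcal R](\rho u)-\frac{d}{dt}\Delta^{-1}\text{div}\,(\rho u)$ along particle paths: the sign $p\ge0$ lets one drop the pressure entirely, and the commutator is estimated in $L^1_tL^\infty_x$ by Gagliardo--Nirenberg through $\bar\rho\|\nabla u\|_2(\|\nabla G\|_2+\|\nabla\omega\|_2+\bar\rho\|\nabla\theta\|_2)$, which is square-integrable in time by the $\nabla G,\nabla\omega$ dissipation of Proposition \ref{propnablau}. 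No time weights, no decay, and no second-order estimates on $\theta$ are needed to close the loop. You would need to replace your $\int\|\text{div}\,u\|_\infty\,dt$ step by something of this kind for the argument to work.
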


\begin{remark}
(i) One can easily check that the quantity $\mathscr N_0$ in Theorem \ref{thm} is scaling invariant, with respect to this scaling transformation (\ref{SCAL}). Therefore, Theorem \ref{thm} provides the global well-posedness of system
(\ref{EQrho})--(\ref{EQtheta}) under some smallness assumption on a scaling invariant quantity, for the case that the
vacuum is allowed.

(ii) Global well-posedness of strong solutions to the Cauchy problem of system (\ref{EQrho})--(\ref{EQtheta}) in the presence of vacuum
has been proved in \cite{HUANGLI11} and \cite{WENZHU17}, with non-vacuum far field and vacuum far field, respectively.
The assumptions concerning the smallness in \cite{HUANGLI11} and \cite{WENZHU17} are imposed as
\begin{eqnarray*}
C_0&=&\int \left(\frac{\rho_0}{2}|u_0|^2+R(\rho_0\log\rho_0-\rho_0+1)+\frac{R}{\gamma-1}\rho_0(\theta_0-\log\theta_0+1)\right)dx\\
&\leq&\varepsilon_0=\varepsilon_0(\|\rho_0\|_\infty, \|\theta_0\|_\infty, \|\nabla u_0\|_2, R, \gamma, \mu, \lambda, \kappa)
\end{eqnarray*}
and
$$
\int\rho_0dx\leq\varepsilon_0=\varepsilon_0(\|\rho_0\|_\infty, \|\sqrt{\rho_0}\theta_0\|_2,\|\nabla u_0\|_2,R, \gamma, \mu, \lambda, \kappa ),
$$
respectively. However, since the explicit dependence of $\varepsilon_0$ on $\|\rho_0\|_\infty, \|\theta_0\|_\infty, \|\sqrt{\rho_0}\theta_0\|_2,$ and $\|\nabla u_0\|_2$ are not derived in \cite{HUANGLI11,WENZHU17}, the scaling invariant quantities, on which the smallness guarantees the global well-posedness, can not be identified there.

(iii) Comparing with the global well-posedness result in \cite{WENZHU17}, our result, Theorem \ref{thm}, allows the initial mass to be infinite. This will be crucial for obtaining the global entropy-bounded solutions in our forthcoming paper \cite{LIXIN-3DNON}.
\end{remark}

Comparing with the isentropic case considered in \cite{HLX12}, the additional
difficulty for studying the global well-posedness of
the full compressible Navier-Stokes equations is that the following basic energy inequality does not provide any dissipation estimates:
$$
\int\rho\left(\frac{|u|^2}{2}+c_v\theta\right)dx=\int\rho_0\left(\frac{|u_0|^2}{2}+c_v\theta_0\right)dx.
$$
Note that the dissipation estimates of the form $\int_0^T\|\nabla u\|_2^2\leq C$, which can be guaranteed by the basic energy estimates for the isentropic case, is crucial in
the arguments of \cite{HLX12}. To overcome this difficulty, some kinds of dissipative estimates were recovered
for the full compressible Navier-Stokes equations in \cite{HUANGLI11} and \cite{WENZHU17}, for the cases that with non-vacuum
and vacuum far field, respectively, by using the entropy inequality
and the conservation of mass. Notcing that the entropy inequality, one of the keys in \cite{HUANGLI11},
holds only for the case that with non-vacuum far field, and the finiteness of the mass is required in \cite{WENZHU17},
and recalling that we consider the case that with vacuum far field and allowing possible infinite mass,
the arguments in \cite{HUANGLI11,WENZHU17} do not work for our case.

A crucial ingredient of obtaining the dissipative estimates
is the following new equation (see the proof in Proposition \ref{proprho3})
\begin{equation*}
  \frac{2\mu+\lambda}{2}(\partial_t\rho^3+\text{div}\,(u\rho^3))+\rho^3p+\rho^3\Delta^{-1}\text{div}\,(\rho u)_t+\rho^3\Delta^{-1}\text{div}\,\text{div}\,(\rho u\otimes u)=0,
\end{equation*}
which is derived by combining the continuity equation and the momentum equation; note that the temperature equation plays no role in deriving this. Comparing with the continuity equation, the main advantage of the above equation is that it enables us
to get $L^\infty(0,T; L^3)$ estimate of $\rho$ without appealing to the the $L^\infty$ of $\text{div}\,u$. In fact, the above equation leads to the following kind of inequality
\begin{eqnarray*}
  \sup_{0\leq t\leq T}\|\rho\|_3^3+\int_0^T\int\rho^3pdxdt
  \lesssim \sup_{0\leq t\leq T}(\|\rho\|_\infty^{\frac23}\|\sqrt\rho u\|_2^{\frac13}\|\sqrt\rho|u|^2\|_2^{\frac13}\|\rho\|_3^3)+\cdots,
\end{eqnarray*}
see Proposition \ref{proprho3} for the details. This motivates us to impose the
smallness conditions on $\|\rho_0\|_\infty^2\|\sqrt{\rho_0} u_0\|_2\|\sqrt{\rho_0}|u_0|^2\|_2$ (this is one of
the terms of $\mathscr N_0$ in Theorem \ref{thm}) to get the bound
of $\|\rho\|_3$. The above inequality also guides us to carry out the
estimates on $\|\sqrt\rho u\|_{L^\infty(0,T; L^2)}, \|\sqrt\rho E\|_{L^\infty(0,T; L^2)},$ and $\|\rho\|_{L^\infty(0,T; L^\infty)}$, which are performed in Propositions \ref{propE2.1}, \ref{propE2.2}, and \ref{proprhobdd}, respectively.
Higher order estimates are required in the estimate for $\|\rho\|_{L^\infty(0,T; L^\infty)}$, and they are carried out
with the help of $\omega=\nabla\times u$ and $G=(2\mu+\lambda)\text{div}\,u-p$, which turn out to have better properties than
$\nabla u$, see Proposition \ref{propnablau}. Combining Proposition \ref{propE2.1}, \ref{propE2.2}, \ref{proprho3}, \ref{proprhobdd}, and \ref{propnablau}, by continuity arguments, we are able to get time-independent estimate on the
scaling invariant quantity $\mathscr N_T$, under the condition that $\mathscr N_0$ is sufficiently small.
With this a priori estimate for $\mathscr N_T$, one can further get the time-independent a priori estimates
of $\|\nabla u\|_{L^\infty(0,T; L^2)}$ and $\|\rho\|_{L^\infty(0,T; L^\infty)}$, based on which, the blow-up criteria
apply, and, thus, the global well-posedness follows.

Throughout this paper, we use $C$ to denote a general positive constant which may vary from line to line.
$A\lesssim B$ means $A\leq CB$ for some positive constant $C$.

%

\section{A priori estimates}

This section is devoted to deriving some a priori estimates for the solutions to the Cauchy problem of system (\ref{EQrho})--(\ref{EQtheta}).
The existence of solution is guaranteed by the following local well-posedness result proved in \cite{CHOKIM06-2}:

\begin{proposition}
  \label{LOCAL}
Under the conditions in Theorem \ref{thm}, there is a positive time $T_*$, such that system
(\ref{EQrho})--(\ref{EQtheta}), with initial data $(\rho_0, u_0, \theta_0)$, has a unique solution $(\rho, u, \theta)$, on $\mathbb R^3\times(0,T_*)$, satisfying
\begin{eqnarray*}
  &\rho\in C([0,T_*]; H^1\cap W^{1,q}),\quad\rho_t\in C([0,T_*] L^2\cap L^q),\\
  &(u,\theta)\in C([0,T_*]; D_0^1\cap D^2)\cap L^2(0,T_*; D^{2,q}),\\
  &(u_t,\theta_t)\in L^2(0,T_*; D_0^1),\quad (\sqrt\rho u_t,\sqrt\rho\theta_t)\in L^\infty_{\text{loc}}(0,T_*; L^2).
\end{eqnarray*}
\end{proposition}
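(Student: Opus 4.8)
The plan is to establish Proposition~\ref{LOCAL} by the classical linearize--iterate--pass-to-the-limit scheme for compressible Navier--Stokes with vacuum, following the Cho--Kim approach. \textbf{Step 1 (regularized iteration).} First I would replace $\rho_0$ by $\rho_0+\delta$, $\delta\in(0,1)$, so that the density stays bounded below, keeping all subsequent bounds independent of $\delta$. Starting from $(\rho^0,u^0,\theta^0)$ equal to the initial data extended trivially in time, define $(\rho^{k+1},u^{k+1},\theta^{k+1})$ by solving, successively, the transport equation
\begin{equation*}
\partial_t\rho^{k+1}+\mathrm{div}\,(\rho^{k+1}u^k)=0,\qquad \rho^{k+1}|_{t=0}=\rho_0+\delta,
\end{equation*}
the linear Lam\'e system
\begin{equation*}
\rho^{k+1}(\partial_tu^{k+1}+u^k\cdot\nabla u^{k+1})-\mu\Delta u^{k+1}-(\mu+\lambda)\nabla\mathrm{div}\,u^{k+1}=-R\nabla(\rho^{k+1}\theta^k),
\end{equation*}
with $u^{k+1}|_{t=0}=u_0$, and the linear parabolic equation
\begin{equation*}
c_v\rho^{k+1}(\partial_t\theta^{k+1}+u^k\cdot\nabla\theta^{k+1})-\kappa\Delta\theta^{k+1}=\mathcal Q(\nabla u^{k+1})-R\rho^{k+1}\theta^{k+1}\,\mathrm{div}\,u^{k+1},
\end{equation*}
with $\theta^{k+1}|_{t=0}=\theta_0$; each of these is solvable by standard linear theory (transport estimates for $\rho^{k+1}$, and $L^2$ energy/Galerkin plus elliptic or parabolic regularity for the two degenerate-weight equations).

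\textbf{Step 2 (uniform estimates).} The heart of the proof is to exhibit $T_*>0$ and a ball in the solution norms that is invariant under $k\mapsto k+1$, uniformly in $\delta$. The transport equation gives the maximum-principle bound $\|\rho^{k+1}\|_\infty\le\bar\rho+\delta$ together with control of $\|\rho^{k+1}\|_{H^1\cap W^{1,q}}$ and $\|\partial_t\rho^{k+1}\|_{L^2\cap L^q}$ on $[0,T_*]$ in terms of $\int_0^t\|\nabla u^k\|_{W^{1,q}}$. For the velocity, I would test the Lam\'e system with $u^{k+1}_t$ and with $\Delta u^{k+1}$, then differentiate it in time and test with $u^{k+1}_t$, to bound $\|\nabla u^{k+1}\|_{L^\infty_tH^1}$, $\|\sqrt{\rho^{k+1}}\,u^{k+1}_t\|_{L^\infty_tL^2}$, $\|\nabla u^{k+1}_t\|_{L^2_tL^2}$ and, by elliptic regularity for the Lam\'e operator with right-hand side in $L^q$, $\|u^{k+1}\|_{L^2_tD^{2,q}}$; here the compatibility condition $-\mu\Delta u_0-(\mu+\lambda)\nabla\mathrm{div}\,u_0+\nabla p_0=\sqrt{\rho_0}\,g_1$ is precisely what bounds $\|\sqrt{\rho^{k+1}}\,u^{k+1}_t\|_{L^2}$ at $t=0$ by $\|g_1\|_2$ plus lower-order data norms — indispensable because the weight $\rho^{k+1}$ degenerates. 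The temperature estimates are parallel: test the $\theta$-equation with $\theta^{k+1}$, $\theta^{k+1}_t$, $\Delta\theta^{k+1}$, differentiate in time, use the second compatibility condition $\kappa\Delta\theta_0+\mathcal Q(\nabla u_0)=\sqrt{\rho_0}\,g_2$ to start the $\|\sqrt{\rho^{k+1}}\,\theta^{k+1}_t\|_{L^2}$ estimate, and use $\nabla u^{k+1}\in L^\infty_tL^6$ to absorb $\mathcal Q(\nabla u^{k+1})$ as an $L^\infty_tL^3$ source. A continuity/Gronwall argument on a sufficiently short interval closes these bounds uniformly in $k$ and $\delta$.

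\textbf{Step 3 (convergence and uniqueness).} Next I would show $(\rho^k,u^k,\theta^k)$ is Cauchy in the weaker norm $C([0,T_*];L^2)\times\big(C([0,T_*];L^2)\cap L^2(0,T_*;D^1_0)\big)^2$, by writing the equations for the successive differences and absorbing top-order contributions using the uniform bounds and the smallness of $T_*$; interpolating this strong convergence against the uniform higher-order bounds lets one pass to the limit in every nonlinear term, and then send $\delta\to0$ by weak-$*$ and strong compactness. Uniqueness follows from an $L^2$-type energy estimate for the difference of two solutions with the same data, with the vacuum weights treated exactly as above, yielding a Gronwall inequality with zero initial value.

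\textbf{Main obstacle.} The genuine difficulty is Step~2: because of the vacuum, every time-derivative estimate must be phrased for $\sqrt{\rho}\,u_t$ and $\sqrt{\rho}\,\theta_t$ (equivalently the material derivatives), and — since no time weights are available under the hypotheses of Theorem~\ref{thm} — their finiteness at $t=0$ rests entirely on the compatibility conditions; one must propagate $\|g_1\|_2$, $\|g_2\|_2$ and $\|(u_0,\theta_0)\|_{D^2}$ through the iteration while simultaneously controlling the full $\theta$--$u$ coupling through the pressure $R\rho\theta$ and the quadratic dissipation $\mathcal Q(\nabla u)$, and closing everything on one short interval $T_*$ independent of $k$ and $\delta$.
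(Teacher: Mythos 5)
The paper does not prove Proposition~\ref{LOCAL} at all: it is quoted as a known result and attributed to Cho--Kim \cite{CHOKIM06-2}, whose proof is exactly the scheme you outline (lower-bound regularization of the density, a linearized transport/Lam\'e/parabolic iteration, uniform short-time estimates hinging on the two compatibility conditions to control $\sqrt{\rho}u_t$ and $\sqrt{\rho}\theta_t$ at $t=0$, then contraction in a lower-order norm and passage to the limit $\delta\to0$). Your sketch is therefore essentially the same approach as the cited source; the only detail worth making explicit is that after replacing $\rho_0$ by $\rho_0+\delta$ the compatibility conditions must be rewritten with weight $\sqrt{\rho_0+\delta}$ and data $g_i^\delta$ bounded in $L^2$ uniformly in $\delta$, which is standard.
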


In the rest of this section, we always assume that $(\rho, u, \theta)$, is a solution to system (\ref{EQrho})--(\ref{EQtheta}), on $\mathbb R^3\times(0,T),$ for some positive time $T$, satisfying the regularities in Proposition \ref{LOCAL} with $T_*$ there
replaced by $T$, with initial data $(\rho_0, u_0, \theta_0)$.

\subsection{Energy inequalities}

\begin{proposition}
\label{propE2.1}
The following estimate holds:
  $$
  \sup_{0\leq t\leq T}\|\sqrt\rho u\|_2^2+\int_0^T\|\nabla u\|_2^2dt\leq C\|\sqrt{\rho_0}u_0\|_2^2+C\int_0^T\|\rho\|_3^2\|\nabla\theta\|_2^2dt,
  $$
for a positive constant $C$ depending only on $R,\gamma,\mu,\lambda,$ and $\kappa$.
\end{proposition}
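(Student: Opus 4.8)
The plan is to run the standard basic energy estimate for the momentum equation: test (\ref{EQu}) with $u$ and integrate over $\mathbb{R}^3$. The only term requiring care is $\int\rho(\partial_tu+(u\cdot\nabla)u)\cdot u\,dx$; writing $\int\rho u\cdot\nabla u\cdot u\,dx=-\tfrac12\int\mathrm{div}\,(\rho u)|u|^2\,dx$ and using the continuity equation (\ref{EQrho}) to cancel $\tfrac12\int(\partial_t\rho+\mathrm{div}\,(\rho u))|u|^2\,dx$, this term equals $\tfrac{d}{dt}\tfrac12\int\rho|u|^2\,dx$. Integrating the viscous terms by parts then gives the identity
\begin{equation*}
\frac{d}{dt}\frac12\int\rho|u|^2\,dx+\mu\int|\nabla u|^2\,dx+(\mu+\lambda)\int(\mathrm{div}\,u)^2\,dx=\int p\,\mathrm{div}\,u\,dx .
\end{equation*}
This computation is formal at first and is to be justified at the regularity level of Proposition \ref{LOCAL} (or performed for a smoother approximating sequence and passed to the limit, as is standard for solutions with vacuum and vacuum far field).

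Next I would handle the two sides. For the dissipation: since $2\mu+3\lambda\geq0$ one has $\mu+\lambda\geq\mu/3>0$, so the $(\mu+\lambda)(\mathrm{div}\,u)^2$ term is nonnegative and the left-hand side is bounded below by $\tfrac{d}{dt}\tfrac12\|\sqrt\rho u\|_2^2+\mu\|\nabla u\|_2^2$. For the pressure term, writing $p=R\rho\theta$ and using Hölder's inequality with exponents $\tfrac13+\tfrac16+\tfrac12=1$ together with the Sobolev embedding $\|\theta\|_6\lesssim\|\nabla\theta\|_2$ on $\mathbb{R}^3$ (legitimate since $\theta\in D_0^1$), followed by Young's inequality,
\begin{equation*}
\left|\int p\,\mathrm{div}\,u\,dx\right|\leq R\|\rho\|_3\|\theta\|_6\|\nabla u\|_2\leq C\|\rho\|_3\|\nabla\theta\|_2\|\nabla u\|_2\leq\frac\mu2\|\nabla u\|_2^2+C\|\rho\|_3^2\|\nabla\theta\|_2^2 ,
\end{equation*}
where $C$ depends only on $R,\gamma,\mu,\lambda,\kappa$. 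It is precisely this Hölder--Sobolev chain that produces the scaling-consistent right-hand side $\|\rho\|_3^2\|\nabla\theta\|_2^2$ and keeps the constant independent of $\|\rho\|_\infty$. Absorbing $\tfrac\mu2\|\nabla u\|_2^2$ into the left-hand side, integrating in $t$ over $(0,T)$ and taking the supremum yields the stated inequality.

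There is no essential obstacle here; this is the $L^2$ energy identity for the momentum equation, and the temperature enters only mildly through the pressure. The two points deserving (minor) attention are: first, producing the right-hand side in exactly the form $\int_0^T\|\rho\|_3^2\|\nabla\theta\|_2^2\,dt$ so that this estimate later combines cleanly with the $L^3$ density bound of Proposition \ref{proprho3} and the temperature estimates; and second, the rigor of the integrations by parts and of the use of $\tfrac12\frac{d}{dt}\int\rho|u|^2\,dx$ in the stated function spaces, which follows from the regularity in Proposition \ref{LOCAL} together with a routine cutoff/density argument (note $\rho\in L^\infty\cap L^6\subset L^3$ so all quantities above are finite).
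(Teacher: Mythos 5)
Your proof is correct and is essentially identical to the paper's: the paper also tests (\ref{EQu}) with $u$, uses the continuity equation to produce $\frac{d}{dt}\|\sqrt\rho u\|_2^2$, bounds the pressure term by $R\|\rho\|_3\|\theta\|_6\|\mathrm{div}\,u\|_2\lesssim\|\rho\|_3\|\nabla\theta\|_2\|\mathrm{div}\,u\|_2$ via H\"older and Sobolev, and absorbs by Young's inequality (into the $(\mu+\lambda)\|\mathrm{div}\,u\|_2^2$ term rather than $\frac\mu2\|\nabla u\|_2^2$, an immaterial difference). Your remarks on $\mu+\lambda\geq\mu/3>0$ and on justifying the formal computation at the regularity of Proposition \ref{LOCAL} are consistent with, and slightly more explicit than, the paper's one-line treatment.
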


\begin{proof}
  Multiplying (\ref{EQu}) by $u$, integration the resultant over $\mathbb R^3$, and noticing that $\mu+\lambda>0$,
  it follows from integration by parts and the Cauchy inequality that
  \begin{eqnarray*}
      &&\frac{d}{dt}\|\sqrt\rho u\|_2^2+\mu\|\nabla u\|_2^2+(\mu+\lambda)\|\text{div}\,u\|_2^2\\
      &\leq& R \|\rho\|_3\|\theta\|_6\|\text{div}u\|_2\leq C\|\rho\|_3\|\nabla\theta\|_2\|\text{div}\,u\|_2\\
      &\leq&(\mu+\lambda)\|\text{div}\,u\|_2^2+C\|\rho\|_3^2\|\nabla\theta\|_2^2,
  \end{eqnarray*}
  from which, the conclusion follows by integrating in $t$.
\end{proof}

\begin{proposition}
  \label{propE2.2}
Assume that $2\mu>\lambda$. Then, the following estimate holds:
\begin{eqnarray*}
  &&\sup_{0\leq t\leq T}\|\sqrt\rho E\|_2^2+\int_0^T(\|\nabla\theta\|_2^2+\||u|\nabla u\|_2^2)dt\\
  &\leq& C\|\sqrt{\rho_0}E_0\|_2^2+C\int_0^T\|\rho\|_\infty\|\rho\|_3^{\frac12}\|\sqrt\rho\theta\|_2\|(\nabla\theta,|u|\nabla u)\|_2^2dt,
  \end{eqnarray*}
  for a positive constant $C$ depending only on $R,\gamma,\mu,\lambda,$ and $\kappa$, where $E=\frac{|u|^2}{2}+c_v\theta$.
\end{proposition}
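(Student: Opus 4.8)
The plan is to derive a differential inequality for $\|\sqrt\rho E\|_2^2$ by combining the momentum and temperature equations, to extract the parabolic dissipation $\|\nabla\theta\|_2^2+\||u|\nabla u\|_2^2$, and to control the nonlinear remainder by the right-hand side. First I would form the total energy balance: multiplying (\ref{EQu}) by $u$, adding (\ref{EQtheta}), and using (\ref{EQrho}), one obtains
\[
\partial_t(\rho E)+\text{div}\,(\rho E u)+\text{div}\,(pu)=\mu u\cdot\Delta u+(\mu+\lambda)u\cdot\nabla\text{div}\,u+\kappa\Delta\theta+\mathcal Q(\nabla u),
\]
with $E=\frac{|u|^2}{2}+c_v\theta$. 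Testing this with $E$ and using (\ref{EQrho}), the left-hand side becomes $\frac12\frac{d}{dt}\|\sqrt\rho E\|_2^2$, the convective terms cancelling exactly. Since this test produces on the velocity side only the weaker dissipation $\mu\|\nabla\frac{|u|^2}{2}\|_2^2$, I would add the identity obtained by testing (\ref{EQu}) with $|u|^2u$, which contributes $\frac14\frac{d}{dt}\|\sqrt\rho|u|^2\|_2^2$ together with $\mu\||u|\nabla u\|_2^2+2\mu\|\nabla\frac{|u|^2}{2}\|_2^2+(\mu+\lambda)\||u|\text{div}\,u\|_2^2$ from its viscous part; since $\|\sqrt\rho|u|^2\|_2^2\le4\|\sqrt\rho E\|_2^2$, the combined energy is equivalent to $\|\sqrt\rho E\|_2^2$.

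Second, I would integrate by parts and reorganize the viscous and heat terms. The structural points are: $\kappa\Delta\theta$ against $E$ gives $-c_v\kappa\|\nabla\theta\|_2^2$; the heating term is treated through the pointwise identity $\mathcal Q(\nabla u)=\mu|\nabla u|^2+\mu\,\partial_iu_j\partial_ju_i+\lambda(\text{div}\,u)^2$, and it is precisely because one tests the \emph{total} energy equation (and not the kinetic and internal energies separately) that the terms $\int\theta|\nabla u|^2$ coming from $\int E\,\mathcal Q(\nabla u)$ cancel those produced by the $c_v\theta$-part of $E$ acting on $\mu u\cdot\Delta u$; the terms $-(\mu+\lambda)\int E(\text{div}\,u)^2$ and $-\mu c_v\int\theta|\nabla u|^2$ are discarded by sign, using $\mu>0$, $\mu+\lambda\ge\frac13\mu>0$ (from $2\mu+3\lambda\ge0$) and $\theta,|u|^2\ge0$. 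What remains is: the dissipation $\mu\||u|\nabla u\|_2^2+c_v\kappa\|\nabla\theta\|_2^2$ (plus $\|\nabla\frac{|u|^2}{2}\|_2^2$); purely-velocity cross terms such as $\int\nabla\frac{|u|^2}{2}\cdot(u\cdot\nabla u)$, $\int(\text{div}\,u)\,u\cdot\nabla\frac{|u|^2}{2}$; mixed cross terms $\int\nabla\frac{|u|^2}{2}\cdot\nabla\theta$, $\int\nabla\theta\cdot(u\cdot\nabla u)$; and the pressure-work terms $R\int\rho\theta\,u\cdot\nabla\frac{|u|^2}{2}$, $R\int\rho\theta|u|^2\text{div}\,u$, $Rc_v\int\rho\theta\,u\cdot\nabla\theta$. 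The velocity cross terms are treated with the decomposition $\Delta u=\nabla\text{div}\,u-\nabla\times\omega$, $\omega=\nabla\times u$, and the identity $\nabla\frac{|u|^2}{2}=(u\cdot\nabla)u+u\times\omega$; this is exactly where $2\mu>\lambda$ is used, to make the resulting quadratic form in $(\nabla\frac{|u|^2}{2},\text{div}\,u,u\times\omega)$ coercive so that these terms are absorbed into the dissipation. The mixed cross terms are absorbed by Young's inequality against $\|\nabla\theta\|_2^2$ and $\||u|\nabla u\|_2^2$, or, where the coefficient is too large, rewritten by means of $\kappa\Delta\theta=c_v\rho(\partial_t\theta+u\cdot\nabla\theta)+p\,\text{div}\,u-\mathcal Q(\nabla u)$, turning them into a $\frac{d}{dt}\!\int\rho|u|^2\theta$ contribution --- harmless, since $\int\rho|u|^2\theta$ is one of the terms composing $\|\sqrt\rho E\|_2^2$ --- plus terms of the same type as the pressure work.

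Third, the pressure-work terms and the remaining lower-order terms are bounded by $C\|\rho\|_\infty\|\rho\|_3^{1/2}\|\sqrt\rho\theta\|_2(\|\nabla\theta\|_2^2+\||u|\nabla u\|_2^2)$. Each is written as an integral of a product in which $\rho^{1/2}\theta$ goes into $L^2$ (yielding $\|\sqrt\rho\theta\|_2$), a factor $\rho^{1/2}$ goes into $L^6$ (yielding $\|\sqrt\rho\|_6=\|\rho\|_3^{1/2}$, which is the origin of the factor $\|\rho\|_3^{1/2}$), the remaining powers of $\rho$ are absorbed by $\rho\le\bar\rho$, and the $u,\theta$-factors are converted into the dissipation using the Sobolev embedding $D_0^1\hookrightarrow L^6$ (so that $\|u\|_{12}^2\lesssim\|\nabla|u|^2\|_2\lesssim\||u|\nabla u\|_2$ and $\|\theta\|_6\lesssim\|\nabla\theta\|_2$) together with Gagliardo--Nirenberg interpolation; Young's inequality then extracts a small multiple of the dissipation out of the scaling-invariant coefficient, the estimate of Proposition \ref{propE2.1} being used to control the residual $\|\nabla u\|_2$-type quantities. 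Integrating in $t$ over $(0,T)$ gives the assertion. The main obstacle is precisely the reorganization in the second step: checking that, after all the integrations by parts, every dangerous term is either cancelled, absorbed into the dissipation exactly under $2\mu>\lambda$, or carries the single scaling-invariant factor $\|\rho\|_\infty\|\rho\|_3^{1/2}\|\sqrt\rho\theta\|_2$, with nothing uncontrolled left over.
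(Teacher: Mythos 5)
Your proposal follows essentially the same route as the paper: test the total energy equation by $E$, test the momentum equation by $|u|^2u$ (where $2\mu>\lambda$ yields coercivity of the viscous quadratic form and hence the dissipation $\||u|\nabla u\|_2^2$), add a large multiple of the second identity to the first so that the indefinite $\||u|\nabla u\|_2^2$-type remainders from the first test are absorbed, and bound the pressure-work term $\int\rho^2\theta^2|u|^2dx$ by H\"older and Sobolev exactly as $\|\rho\|_\infty\|\rho\|_3^{1/2}\|\sqrt\rho\theta\|_2\|\nabla\theta\|_2\||u|\nabla u\|_2$. The only superfluous element is the closing appeal to Proposition \ref{propE2.1}: no unweighted $\|\nabla u\|_2$ terms actually survive the reorganization (every viscous remainder carries the weight $|u|$ and every pressure term the weight $\rho\theta|u|$), and invoking that proposition would introduce $\|\sqrt{\rho_0}u_0\|_2$-dependence absent from the stated estimate, so it should simply be dropped.
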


\begin{proof}
One can verify
\begin{equation}
  \label{EQE}
  \rho(\partial_tE+u\cdot\nabla E)+\text{div}\,(up)-\kappa\Delta\theta=\text{div}\,(\mathcal S\cdot u),
\end{equation}
where $\mathcal S=\mu(\nabla u+(\nabla u)^t)+\lambda\text{div}\,uI$.
Multiplying (\ref{EQE}) by $E$, integrating the resultant over $\mathbb R^3$, it follows from integration by parts that
  \begin{eqnarray*}
    &&\frac12\frac{d}{dt}\|\sqrt\rho E\|_2^2+\kappa c_v\|\nabla\theta\|_2^2\\
    &=&\int\left[-\frac\kappa2\nabla\theta\cdot\nabla|u|^2+(up-\mathcal S\cdot u)\cdot\left(c_v\nabla\theta+\frac{\nabla |u|^2}{2}\right)\right]dx\\
    &\leq&\frac{c_v\kappa}{2}\|\nabla\theta\|_2^2+C\||u|\nabla u\|_2^2+C\int\rho^2\theta^2|u|^2dx,
  \end{eqnarray*}
  which yields
  \begin{equation}
     \frac{d}{dt}\|\sqrt\rho E\|_2^2+\kappa c_v\|\nabla\theta\|_2^2\lesssim\||u|\nabla u\|_2^2+ \int\rho^2\theta^2|u|^2dx.
     \label{E2-1}
  \end{equation}

  Multiplying (\ref{EQu}) by $|u|^2u$, integrating the resultant over $\mathbb R^3$, it follows from integration by parts that
  \begin{eqnarray*}
    \frac14\frac{d}{dt}\|\sqrt\rho|u|^2\|_2^2-\int \mathbb(\mu\Delta u+(\mu+\lambda)\nabla\text{div}\,u)\cdot|u|^2 udx=-\int
    p\text{div}\,(|u|^2u)dx\\
    \leq\,\,\, \left(\mu-\frac\lambda2\right)\int|u|^2|\nabla u|^2 dx+C\int \rho^2\theta^2|u|^2dx,
  \end{eqnarray*}
  Some elementary calculations show that
  $$
  -\int \mathbb(\mu\Delta u+(\mu+\lambda)\nabla\text{div}\,u)\cdot|u|^2 udx\geq (2\mu-\lambda)\int|u|^2|\nabla u|^2dx.
  $$
  Combining the above two inequalities leads to
  \begin{equation}
    \frac{d}{dt}\|\sqrt\rho|u|^2\|_2^2+2(2\mu-\lambda)\||u|\nabla u\|_2^3\lesssim\int\rho^2\theta^2|u|^2dx. \label{E2-2}
  \end{equation}

  Multiplying (\ref{E2-2}) by a sufficient large number $K$ depending only on $R, \gamma, \mu,\lambda,$ and $\kappa$, and summing the resultant
  with (\ref{E2-1}), one obtains
  \begin{equation*}
    \frac{d}{dt}(\|\sqrt\rho E\|_2^2+K\|\sqrt\rho|u|^2\|_2^2)+\kappa c_v\|\nabla\theta\|_2^2+(2\mu-\lambda)K\||u|\nabla u\|_2^2
    \lesssim\int\rho^2\theta^2|u|^2dx,
  \end{equation*}
  from which, noticing that the H\"older and Sobolev inequalities yield
  \begin{eqnarray}
    \int\rho^2\theta^2|u|^2dx&\leq&\|\sqrt\rho\theta\|_2\|\theta\|_6\||u|^2\|_6\|\rho\|_9^{\frac32}\nonumber\\
    &\lesssim&\|\sqrt\rho\theta\|_2\|\nabla\theta\|_2\|\nabla|u|^2\|_2\|\rho\|_\infty\|\rho\|_3^{\frac12},\label{E2-2-1}
  \end{eqnarray}
  one obtains
  \begin{eqnarray*}
   \frac{d}{dt}(\|\sqrt\rho E\|_2^2+K\|\sqrt\rho|u|^2\|_2^2)+\kappa c_v\|\nabla\theta\|_2^2+(2\mu-\lambda)K\||u|\nabla u\|_2^2\\
    \lesssim\|\rho\|_\infty\|\rho\|_3^{\frac12}\|\sqrt\rho\theta\|_2\|\nabla\theta\|_2\|\nabla|u|^2\|_2.
  \end{eqnarray*}
  Integrating this in $t$ and using the Cauchy inequality, the conclusion follows.
\end{proof}

The following proposition on the $L^\infty(0,T; L^3)$ estimate for $\rho$ is crucial in the proof of this paper.

\begin{proposition}
\label{proprho3}
The following estimate holds
\begin{eqnarray*}
  \sup_{0\leq t\leq T}\|\rho\|_3^3+\int_0^T\int\rho^3pdxdt
  &\leq&C\sup_{0\leq t\leq T}(\|\rho\|_\infty^{\frac23}\|\sqrt\rho u\|_2^{\frac13}\|\sqrt\rho|u|^2\|_2^{\frac13}\|\rho\|_3^3)\\
  &&+C\int_0^T\|\rho\|_\infty^2\|\rho\|_3^2\|\nabla u\|_2^2dt+C\|\rho_0\|_3^3,
\end{eqnarray*}
for a positive constant $C$ depending only on $R,\gamma,\mu,\lambda,$ and $\kappa$.
\end{proposition}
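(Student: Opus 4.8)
The plan is to establish and then exploit the new equation announced in the introduction, and to run an $L^3$-in-space energy estimate on $\rho$. First I would derive the identity: multiplying the continuity equation (\ref{EQrho}) by $3\rho^2$ gives $\partial_t\rho^3+\text{div}\,(u\rho^3)=-2\rho^3\text{div}\,u$; taking the divergence of the momentum equation (\ref{EQu}), rewriting $\rho(\partial_tu+u\cdot\nabla u)=\partial_t(\rho u)+\text{div}\,(\rho u\otimes u)$ with the help of (\ref{EQrho}), and inverting the Laplacian yields $(2\mu+\lambda)\text{div}\,u=p+\Delta^{-1}\text{div}\,(\rho u)_t+\Delta^{-1}\text{div}\,\text{div}\,(\rho u\otimes u)$; substituting the latter into the former produces the displayed equation. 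Here $\nabla\Delta^{-1}\text{div}$ and $\Delta^{-1}\text{div}\,\text{div}$ are Calder\'on--Zygmund operators, bounded on $L^p$ for $1<p<\infty$, and the integrability $\rho\le\bar\rho$, $\sqrt\rho u\in L^2$, $u\in L^6$ is what makes the whole-space manipulations (Sobolev embedding, singular-integral bounds) legitimate.

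Next I would integrate the new equation over $\mathbb R^3$, so that the transport term drops and, writing $\Phi:=\Delta^{-1}\text{div}\,(\rho u)$,
\[
\frac{2\mu+\lambda}{2}\frac{d}{dt}\|\rho\|_3^3+\int\rho^3 p\,dx=-\int\rho^3\partial_t\Phi\,dx-\int\rho^3\Delta^{-1}\text{div}\,\text{div}\,(\rho u\otimes u)\,dx.
\]
The last term is controlled by H\"older's inequality, the $L^p$-boundedness of $\Delta^{-1}\text{div}\,\text{div}$, the interpolation $\|\rho\|_r\le\|\rho\|_\infty^{1-3/r}\|\rho\|_3^{3/r}$ ($r\ge3$), and $\||u|^2\|_3\lesssim\|\nabla u\|_2^2$; this gives $\lesssim\|\rho\|_\infty^2\|\rho\|_3^2\|\nabla u\|_2^2$, which is the second term of the claim after integrating in $t$.

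The main work is the term with $\partial_t\Phi$. I would integrate by parts in time, $-\int_0^t\!\!\int\rho^3\partial_t\Phi=-\big[\int\rho^3\Phi\,dx\big]_0^t+\int_0^t\!\!\int\partial_s(\rho^3)\Phi$, and use $\partial_s\rho^3=-2\rho^3\text{div}\,u-\text{div}\,(\rho^3u)$ to rewrite the last integral as a combination of $\int\rho^3u\cdot\nabla\Phi$ and $\int\rho^3\text{div}\,u\,\Phi$. For the boundary term, H\"older and $\dot H^1\hookrightarrow L^6$ give $\int\rho^3\Phi\le\|\rho^3\|_{6/5}\|\Phi\|_6\lesssim\|\rho\|_{18/5}^3\|\rho u\|_2$; then the elementary interpolation $\|\rho u\|_2\lesssim\|\rho\|_4^{2/3}\|\sqrt\rho u\|_2^{1/3}\|\sqrt\rho|u|^2\|_2^{1/3}$ --- obtained by writing $\rho^2|u|^2=\rho^{4/3}(\rho^{1/2}|u|)^{2/3}(\rho^{1/2}|u|^2)^{2/3}$ and applying H\"older with three exponents equal to $3$ --- together with $\|\rho\|_r\le\|\rho\|_\infty^{1-3/r}\|\rho\|_3^{3/r}$ produces exactly $\lesssim\|\rho\|_\infty^{2/3}\|\sqrt\rho u\|_2^{1/3}\|\sqrt\rho|u|^2\|_2^{1/3}\|\rho\|_3^3$, the first term of the claim (and at $t=0$ this is absorbed into $C\|\rho_0\|_3^3$ using $\rho_0\le\bar\rho$). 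The bulk term $\int\rho^3u\cdot\nabla\Phi$ is handled by $\|\nabla\Phi\|_6\lesssim\|\rho u\|_6\le\|\rho\|_\infty\|u\|_6$, H\"older, and the same interpolations, again landing on $\|\rho\|_\infty^2\|\rho\|_3^2\|\nabla u\|_2^2$.

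The remaining term $\int\rho^3\text{div}\,u\,\Phi$ is the one I expect to be the genuine obstacle. The crude estimate (H\"older plus $\|\Phi\|_6\lesssim\|\rho u\|_2$ and $\|\text{div}\,u\|_2$) leaves, after Young's inequality, a time integral of $\|\sqrt\rho u\|_2^2$ against a power of $\|\rho\|_\infty$, which the finite-mass arguments would control but which is not available here with possibly infinite mass; so one must instead use the effective-viscous-flux decomposition $(2\mu+\lambda)\text{div}\,u=G+p$, absorb the pressure part into the $\int_0^T\!\!\int\rho^3p\,dx\,dt$ sitting on the left, and control the $G$ part together with the flux and dissipation estimates (and the inequality $\|\sqrt\rho|u|^2\|_2\lesssim\|\rho\|_3^{1/2}\|\nabla u\|_2^2$) by Propositions \ref{propE2.1} and \ref{propE2.2}, so that only the two scaling-invariant quantities in the statement remain. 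Integrating in $t$ and collecting all the terms then gives the stated inequality. A subsidiary, routine care throughout is to keep $\Delta^{-1}\text{div}$ and $\Delta^{-1}\text{div}\,\text{div}$ applied only to functions in the appropriate $L^p$ spaces so that the whole-space estimates remain valid.
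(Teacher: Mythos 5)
Your derivation of the key identity, the treatment of the time-derivative term by integration by parts in time (your $L^{6/5}\times L^6$ duality for $\int\rho^3\Phi\,dx$ is a valid variant of the paper's $L^1\times L^\infty$ estimate via Gagliardo--Nirenberg), and your bounds for $\int\rho^3\Delta^{-1}\mathrm{div}\,\mathrm{div}(\rho u\otimes u)\,dx$ and $\int\rho^3u\cdot\nabla\Phi\,dx$ all match the paper's argument. The flaw is in your last paragraph. The term $\int\rho^3\,\mathrm{div}\,u\,\Phi\,dx$ is not an obstacle at all: the ``crude'' estimate fails only because you bound $\|\Phi\|_6\lesssim\|\rho u\|_2$ and then pass to $\|\rho\|_\infty^{1/2}\|\sqrt\rho u\|_2$, which indeed strands a $\|\sqrt\rho u\|_2^2$ term. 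The correct split is
\begin{equation*}
\Big|\int\mathrm{div}\,u\,\rho^3\,\Phi\,dx\Big|\leq\|\mathrm{div}\,u\|_2\,\|\rho\|_9^3\,\|\Phi\|_6,\qquad
\|\Phi\|_6\lesssim\|\nabla\Phi\|_2\lesssim\|\rho u\|_2\leq\|\rho\|_3\|u\|_6\lesssim\|\rho\|_3\|\nabla u\|_2,
\end{equation*}
which gives exactly $\|\rho\|_\infty^2\|\rho\|_3^2\|\nabla u\|_2^2$, the second term of the claim. This is precisely how the paper handles it.

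The workaround you sketch instead is both unnecessary and unworkable as stated. Absorbing the pressure part of $(2\mu+\lambda)\mathrm{div}\,u=G+p$ into $\int_0^T\!\!\int\rho^3p\,dx\,dt$ on the left would require $\|\Phi\|_\infty$ (or a similar prefactor) to be small, which is not available at this stage --- that smallness is only extracted later in the continuity argument of Proposition \ref{propkey}. Moreover, invoking Propositions \ref{propE2.1} and \ref{propE2.2} inside this proof would make the right-hand side depend on $\|\sqrt{\rho_0}E_0\|_2$ and related quantities that do not appear in the statement, destroying the closed form of the estimate that the continuity argument relies on. Replace that paragraph with the one-line H\"older estimate above and your proof is complete.
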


\begin{proof}
Applying the operator $\Delta^{-1}\text{div}$ to (\ref{EQu}) yields
\begin{equation}\label{eq}
\Delta^{-1}\text{div}\,(\rho u)_t+\Delta^{-1}\text{div}\,\text{div}\,(\rho u\otimes u)-(2\mu+\lambda)\text{div}\,u+p=0.
\end{equation}
Multiplying the above equation by $\rho^3$ and noticing that
$$
\partial_t \rho ^3+\text{div}\,(u\rho^3)+2\text{div}\,u\rho^3=0,
$$
one obtains
\begin{equation}
  \frac{2\mu+\lambda}{2}(\partial_t\rho^3+\text{div}\,(u\rho^3))+\rho^3p+\rho^3\Delta^{-1}\text{div}\,(\rho u)_t+\rho^3\Delta^{-1}\text{div}\,\text{div}\,(\rho u\otimes u)=0.\label{EQrho3}
\end{equation}
Integrating the above equation over $\mathbb R^3$ yields
\begin{eqnarray}
  \frac{2\mu+\lambda}{2}\frac{d}{dt}\|\rho\|_3^3+\int\rho^3pdx+\int\rho^3
  \Delta^{-1}\text{div}\,(\rho u)_tdx\nonumber\\
  =-\int\rho^3\Delta^{-1}\text{div}\,\text{div}\,(\rho u\otimes u)dx.\label{rho3-1}
\end{eqnarray}
Using (\ref{EQrho}), one deduces
\begin{eqnarray*}
  &&\int\rho^3
  \Delta^{-1}\text{div}\,(\rho u)_tdx
  \\
  &=&\frac{d}{dt}\int\rho^3\Delta^{-1}\text{div}\,(\rho u)dx+\int[\text{div}\,(\rho^3 u)+2\text{div}\,u\rho^3]
  \Delta^{-1}\text{div}\,(\rho u)dx\\
  &=&\int
  [2\text{div}\,u\rho^3 \Delta^{-1}\text{div}\,(\rho u)-\rho^3u\cdot\nabla\Delta^{-1}\text{div}\,(\rho u)]dx\\
  &&+\frac{d}{dt}\int\rho^3\Delta^{-1}\text{div}\,(\rho u)dx.
\end{eqnarray*}
Therefore, it follows from (\ref{rho3-1}) that
\begin{align}
  &\frac{d}{dt}\int\left(\frac{2\mu+\lambda}{2}+ \Delta^{-1}\text{div}\,(\rho u)\right)\rho^3 dx+\int\rho^3 p dx\nonumber\\
  =&~~~\int\left[\rho^3(u\cdot\nabla\Delta^{-1}\text{div}\,(\rho u)-\Delta^{-1}\text{div}\text{div}\,(\rho u\otimes u))-2\text{div}\,u\rho^3\Delta^{-1}\text{div}\,(\rho u)\right]dx.\label{rho3-2}
\end{align}
Noticing that
\begin{eqnarray*}
\|\nabla\Delta^{-1}\text{div}\,(\rho u)\|_2&\lesssim&\|\rho u\|_2\lesssim\|\rho\|_3\|u\|_6,\\
\|\Delta^{-1}\text{div}\,\text{div}\,(\rho u\otimes u)\|_{\frac32}
&\lesssim&\|\rho|u|^2\|_{\frac32}\lesssim\|\rho\|_3\|u\|_6^2,
\end{eqnarray*}
it follows from the H\"older and Sobolev embedding inequality that
\begin{eqnarray}
  \int\rho^3(u\cdot\nabla\Delta^{-1}\text{div}\,(\rho u)-\Delta^{-1}\text{div}\text{div}\,(\rho u\otimes u))dx\nonumber
\\
\lesssim\|\rho \|_9^3\|\rho\|_3\|u\|_6^2\lesssim\|\rho\|_\infty^2\|\rho\|_3^2\|\nabla u\|_2^2.\label{rho3-3}
\end{eqnarray}
By the Sobolev embedding and elliptic estimates
\begin{eqnarray*}
\|\Delta^{-1}\text{div}\,(\rho u)\|_6&\lesssim&\|\nabla \Delta^{-1}\text{div}\,(\rho u)\|_2
\lesssim\|\rho u\|_2\\
&\lesssim&\|\rho\|_3\|u\|_6\lesssim\|\rho\|_3\|\nabla u\|_2,
\end{eqnarray*}
and, thus, the H\"older inequality yields
\begin{equation}
  \label{rho3-4}
  \left|\int\text{div}\,u\rho^3\Delta^{-1}\text{div}\,(\rho u)dx\right|
  \lesssim\|\text{div}u\|_2\|\rho\|_9^3\|\rho\|_3\|\nabla u\|_2\lesssim\|\rho\|_\infty^2\|\rho\|_3^2\|\nabla u\|_2^2.
\end{equation}
By the Gagliardo-Nirenberg inequality and using the elliptic estimates, it follows
\begin{eqnarray}
  \|\Delta^{-1}\text{div}\,(\rho u)\|_\infty&\lesssim&\|\Delta^{-1}\text{div}\,(\rho u)\|_6^{\frac13}\|\nabla\Delta^{-1}\text{div}\,(\rho u)
  \|_4^{\frac23}\nonumber\\
  &\lesssim&\|\rho u\|_2^{\frac13}\|\rho u\|_4^{\frac23}
  \lesssim \|\rho\|_\infty^{\frac23}\|\sqrt{\rho}u\|_2^{\frac13}\|\sqrt{\rho}|u|^2\|_2^{\frac13}.\label{rho3-5}
\end{eqnarray}
Integrating (\ref{rho3-2}) in $t$, using (\ref{rho3-3})--(\ref{rho3-5}), and by some straightforward calculations, the conclusion follows.
\end{proof}

\begin{proposition}
  \label{propnablau}
  Assume $$\sup_{0\leq t\leq T}\|\rho\|_\infty\leq4\bar\rho.$$
  Then, there is a positive constant $C$ depending only on $R,\gamma,\mu,\lambda$, and $\kappa$, such that
  \begin{eqnarray*}
    &&\sup_{0\leq t\leq T}\|\nabla u\|_2^2+\int_0^T\left\|\left(\sqrt\rho u_t,\frac{\nabla G}{\sqrt{\bar\rho}},\frac{\nabla\omega}{\sqrt{\bar\rho}}\right)\right\|_2^2dt\\
    &\leq& C\|\nabla u_0\|_2^2+C\bar\rho\sup_{0\leq t\leq T}\|\sqrt\rho\theta\|_2^2+C\bar\rho^3\int_0^T\|\nabla u\|_2^4\|(\nabla u,\sqrt{\bar\rho}\sqrt\rho\theta)\|_2^2dt\\
    &&+C\int_0^T (\bar\rho+\bar\rho^2\|\rho\|_3^{\frac12}\|\sqrt\rho\theta\|_2)\|(\nabla\theta,|u|\nabla u)\|_2^2dt,
  \end{eqnarray*}
  where $G=(2\mu+\lambda)\text{div}\,u-p$ and $\omega=\nabla\times u$.
\end{proposition}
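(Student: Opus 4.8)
The plan is to run the classical $u_t$-testing energy estimate for $\|\nabla u\|_2^2$, tracking every constant through powers of $\bar\rho$, and using the effective viscous flux $G$ and the vorticity $\omega$ to recover second-order control of $u$. Multiplying \eqref{EQu} by $u_t$ and integrating by parts gives
$$
\frac{d}{dt}\Big(\frac\mu2\|\nabla u\|_2^2+\frac{\mu+\lambda}{2}\|\text{div}\,u\|_2^2-\int p\,\text{div}\,u\,dx\Big)+\|\sqrt\rho u_t\|_2^2=-\int\rho(u\cdot\nabla u)\cdot u_t\,dx-\int p_t\,\text{div}\,u\,dx.
$$
In the bracket, $|\int p\,\text{div}\,u\,dx|\le\frac\mu4\|\nabla u\|_2^2+C\|p\|_2^2$ with $\|p\|_2^2=R^2\|\rho\theta\|_2^2\le R^2\|\rho\|_\infty\|\sqrt\rho\theta\|_2^2$, so the bracket is comparable to $\|\nabla u\|_2^2$ modulo $C\bar\rho\|\sqrt\rho\theta\|_2^2$; this accounts for the $C\|\nabla u_0\|_2^2$ and $C\bar\rho\sup\|\sqrt\rho\theta\|_2^2$ terms of the conclusion. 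On the right, the convective term is $\le\frac14\|\sqrt\rho u_t\|_2^2+\|\rho\|_\infty\||u|\nabla u\|_2^2\le\frac14\|\sqrt\rho u_t\|_2^2+C\bar\rho\||u|\nabla u\|_2^2$, already of admissible form.

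Next I would feed in the elliptic structure. From \eqref{EQu} and $\Delta u=\nabla\text{div}\,u-\nabla\times\omega$ one has $\rho\dot u=\nabla G-\mu\nabla\times\omega$ with $\dot u=u_t+u\cdot\nabla u$, hence $\Delta G=\text{div}(\rho\dot u)$ and $\mu\Delta\omega=\nabla\times(\rho\dot u)$; elliptic estimates together with $\rho\le4\bar\rho$ then yield
$$
\frac1{\bar\rho}\big(\|\nabla G\|_2^2+\|\nabla\omega\|_2^2\big)\le C_0\big(\|\sqrt\rho u_t\|_2^2+\bar\rho\||u|\nabla u\|_2^2\big).
$$
Adding a sufficiently small multiple of this to the differential identity moves $\frac1{\bar\rho}\|\nabla G\|_2^2+\frac1{\bar\rho}\|\nabla\omega\|_2^2$ onto the left-hand side, as in the statement, at the cost of absorbing part of $\|\sqrt\rho u_t\|_2^2$ and of adding $C\bar\rho\||u|\nabla u\|_2^2$ on the right. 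One also records the Helmholtz-type bound $\|\nabla u\|_{L^s}\lesssim\|G\|_{L^s}+\|\omega\|_{L^s}+\|p\|_{L^s}$, in particular $\|\nabla u\|_6\lesssim\|\nabla G\|_2+\|\nabla\omega\|_2+\bar\rho\|\nabla\theta\|_2$ since $\|p\|_6\le\|\rho\|_\infty\|\theta\|_6\lesssim\bar\rho\|\nabla\theta\|_2$.

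The heart of the argument is the pressure-work term $-\int p_t\,\text{div}\,u\,dx$. Combining \eqref{EQrho}, \eqref{EQtheta} and $p=R\rho\theta$, the pressure obeys $p_t+\text{div}(pu)+(\gamma-1)p\,\text{div}\,u=(\gamma-1)(\kappa\Delta\theta+\mathcal Q(\nabla u))$; substituting this and then replacing $\kappa\Delta\theta$ by $c_v\rho\dot\theta+p\,\text{div}\,u-\mathcal Q(\nabla u)$ (the temperature equation again), the $\int p(\text{div}\,u)^2$ and $\int\mathcal Q(\nabla u)\,\text{div}\,u$ contributions cancel and there remains
$$
-\int p_t\,\text{div}\,u\,dx=-\int pu\cdot\nabla\text{div}\,u\,dx-(\gamma-1)c_v\int\rho\dot\theta\,\text{div}\,u\,dx,\qquad\dot\theta=\theta_t+u\cdot\nabla\theta.
$$
Using $\text{div}\,u=(G+p)/(2\mu+\lambda)$ and $\nabla\text{div}\,u=\nabla(G+p)/(2\mu+\lambda)$, together with one further integration by parts in the $\int\rho^2\theta\dot\theta$ piece (via \eqref{EQrho} and \eqref{EQtheta}), in which, crucially, all $\nabla\rho$-terms cancel, both terms reduce to a finite list: time derivatives $\propto\frac{d}{dt}\|p\|_2^2$, which are absorbed into the bracket; the purely pressure term $\propto\int p^3\,dx=R^3\int\rho^3\theta^3\,dx$, which by H\"older, Sobolev and the interpolation $\|\rho\|_9^{3/2}\le\|\rho\|_\infty\|\rho\|_3^{1/2}$ (exactly as in \eqref{E2-2-1}) is $\lesssim\bar\rho^2\|\rho\|_3^{1/2}\|\sqrt\rho\theta\|_2\|\nabla\theta\|_2^2$; terms linear in $\nabla G$ such as $\int pu\cdot\nabla G$, $\int p^2G$, $\int p\,\text{div}\,u\,G$, $\int\rho^2\theta^2G$ and $\kappa\int\nabla\theta\cdot\nabla G$; and $\int\mathcal Q(\nabla u)\,G\lesssim\|\nabla u\|_{12/5}^2\|G\|_6\lesssim\|\nabla u\|_2^{3/2}\|\nabla u\|_6^{1/2}\|\nabla G\|_2$, into which one inserts the bound for $\|\nabla u\|_6$. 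Each term linear in $\nabla G$ or $\nabla\omega$ is split by Young's inequality so that a small multiple of $\frac1{\bar\rho}\|\nabla G\|_2^2+\frac1{\bar\rho}\|\nabla\omega\|_2^2$ is absorbed and the remainder is forced into one of the two admissible shapes $C(\bar\rho+\bar\rho^2\|\rho\|_3^{1/2}\|\sqrt\rho\theta\|_2)\|(\nabla\theta,|u|\nabla u)\|_2^2$ (to be controlled afterwards by Propositions \ref{propE2.1}--\ref{propE2.2}) or $C\bar\rho^3\|\nabla u\|_2^4\|(\nabla u,\sqrt{\bar\rho}\sqrt\rho\theta)\|_2^2$ (a higher-order term, small once $\sup_{[0,T]}\|\nabla u\|_2^2$ is known to be small). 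Integrating in $t$ and absorbing the accumulated small multiples of $\|\sqrt\rho u_t\|_2^2$, $\frac1{\bar\rho}\|\nabla G\|_2^2$, $\frac1{\bar\rho}\|\nabla\omega\|_2^2$ then gives the stated inequality.

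I expect the main obstacle to be precisely the bookkeeping in this last step: one must choose the interpolation exponents and the Young splittings so that every one of the many terms produced by expanding $\int p_t\,\text{div}\,u\,dx$ lands in exactly one of the two allowed forms, with the correct powers of $\bar\rho$ and the single power $\|\rho\|_3^{1/2}$, and so that no quantity uncontrolled by the earlier propositions (notably $\|\nabla\rho\|$ or $\|\nabla^2\theta\|$) is ever generated. This is why the cancellations above --- eliminating $\Delta\theta$ in favour of $\rho\dot\theta$, and the vanishing of the $\nabla\rho$-terms in the computation of $\int\rho^2\theta\dot\theta$ --- are essential rather than cosmetic.
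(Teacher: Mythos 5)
Your proposal follows essentially the same route as the paper: testing the momentum equation by $u_t$, absorbing $\|\nabla G\|_2^2$ and $\|\nabla\omega\|_2^2$ via the elliptic structure of $\rho\dot u=\nabla G-\mu\nabla\times\omega$, substituting the temperature equation into $p_t$ so that $\Delta\theta$ is integrated by parts against $G$ (never producing $\nabla^2\theta$ or $\nabla\rho$), and closing with the interpolation of \eqref{E2-2-1} and the bound \eqref{nu6} for $\|\nabla u\|_6$. The only difference is cosmetic bookkeeping: the paper keeps the exact identity $\int p_t p\,dx=\tfrac12\frac{d}{dt}\|p\|_2^2$ and only expands $\int p_tG\,dx$, whereas your detour through $p_t=(\gamma-1)c_v\rho\dot\theta-\mathrm{div}(pu)$ generates a few extra, mutually cancelling terms (e.g.\ $\int p^2G\,dx$ versus $\int\rho^2\theta^2G\,dx$, and $\int p^3\,dx$), each of which is nonetheless individually controllable as you indicate.
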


\begin{proof}
  Multiplying (\ref{EQu}) by $u_t$, integrating the resultant over $\mathbb R^3$, it follows from integration by parts that
  \begin{eqnarray}
    \frac12\frac{d}{dt}(\mu\|\nabla u\|_2^2+(\mu+\lambda)\|\text{div}\,u\|_2^2)-\int p\text{div}\,u_tdx+\|\sqrt\rho u_t\|_2^2\nonumber\\
    =-\int\rho (u\cdot\nabla)u\cdot u_tdx. \label{nablau-1}
  \end{eqnarray}
  Noticing that $\text{div}\,u=\frac{G+p}{2\mu+\lambda}$, it follows
  \begin{eqnarray}
    &&-\int p\text{div}\,u_tdx=-\frac{d}{dt}\int p\text{div}\,udx+\int p_t\text{div}\,udx\nonumber\\
    &=&-\frac{d}{dt}\int p\text{div}\,udx+\frac{1}{2(2\mu+\lambda)}\frac{d}{dt}\|p\|_2^2+\frac{1}{2\mu+\lambda}\int p_tGdx.\label{nablau-2}
  \end{eqnarray}
  Noticing that (\ref{EQtheta}) implies
  $$
  p_t=(\gamma-1)(\mathcal Q(\nabla u)-p\text{div}\,u+\kappa\Delta\theta)-\text{div}\,(up),
  $$
  and, thus, integration by parts gives
  \begin{eqnarray}
    \int p_tGdx = \int[(\gamma-1)(\mathcal Q(\nabla u)-p\text{div}\,u)G+(up-\kappa(\gamma-1)\nabla\theta)\cdot\nabla G]dx. \label{nablau-3}
  \end{eqnarray}
  Substituting (\ref{nablau-3}) into (\ref{nablau-2}), then the resultant into (\ref{nablau-1}), and noticing that $\|\nabla u\|_2^2
  =\|\omega\|_2^2+\|\text{div}\,u\|_2^2$, by some straightforward calculations, one obtains
  \begin{eqnarray}
    &&\frac12\frac{d}{dt}\left( \mu \|\omega\|_2^2+\frac{\|G\|_2^2}{ 2\mu+\lambda }\right)+\|\sqrt\rho u_t\|_2^2\nonumber \\
    &=&-\int\rho (u\cdot\nabla)u\cdot u_tdx+\frac{1}{2\mu+\lambda}\int(\kappa(\gamma-1)\nabla\theta-up)\cdot\nabla G dx\nonumber\\
    &&-\frac{\gamma-1}{2\mu+\lambda}\int (\mathcal Q(\nabla u)-p\text{div}\,u)Gdx. \label{nablau-4}
  \end{eqnarray}

  Use $\Delta u=\nabla\text{div}\,u-\nabla\times\nabla\times u$ to rewrite (\ref{EQu}) as
  \begin{equation}\label{EQU'}
  \rho(u_t+u\cdot\nabla u)=\nabla G-\mu\nabla\times\omega.
  \end{equation}
  Testing this by $\nabla G$, noticing $\int\nabla G\cdot\nabla\times\omega dx=0$, and recalling $\|\rho\|_\infty\leq4\bar\rho$ yield
  \begin{eqnarray*}
\|\nabla G\|_2^2&=&\int\rho(u_t+u\cdot\nabla u)\cdot\nabla Gdx\\
    &\leq&\int\left(\frac{|\nabla G|^2}{2} +2\bar\rho\rho|u_t|^2\right)dx+\int\rho u\cdot\nabla u\cdot\nabla Gdx,
  \end{eqnarray*}
  which gives
  \begin{equation}
    \frac{\|\nabla G\|_2^2}{16\bar\rho}\leq \frac14\|\sqrt\rho u_t\|_2^2+\frac{1}{8\bar\rho}\int\rho (u\cdot\nabla) u\cdot\nabla Gdx.
    \label{nGellpticEst}
  \end{equation}
  Similarly
  \begin{equation}
    \frac{\mu^2\|\nabla\omega\|_2^2}{16\bar\rho}\leq\frac14\|\sqrt\rho u_t\|_2^2+\frac{1}{8\bar\rho}\int\rho(u\cdot\nabla)u\cdot\nabla\times\omega dx.
    \label{nomegaellipticEst}
  \end{equation}
  Thanks to (\ref{nGellpticEst}) and (\ref{nomegaellipticEst}), one obtains from
  (\ref{nablau-4}) that
  \begin{align}
    &\frac12\frac{d}{dt}\left( \mu \|\omega\|_2^2+\frac{\|G\|_2^2}{ 2\mu+\lambda }\right)+\frac12\|\sqrt\rho u_t\|_2^2
    +\frac{1}{16\bar\rho}(\|\nabla G\|_2^2+\mu^2\|\nabla\omega\|_2^2)\nonumber\\
    \leq& C\int\rho|u||\nabla u|\left[|u_t|+\frac{1}{\bar\rho}(\nabla G|+|\nabla\omega|)\right]dx+C\int(|\nabla\theta|+\rho\theta|u|)|\nabla G|dx\nonumber\\
    &+C\int(|\nabla u|^2+\rho\theta|\nabla u|)|G|dx
    =:I_1+I_2+I_3.\label{nablau-5}
  \end{align}

  The terms $I_1, I_2,$ and $I_3$ are estimated as follows. For $I_1$, by the H\"older and Young inequalities, one obtains
  \begin{eqnarray*}
    I_1&\lesssim&\sqrt{\bar\rho}\||u|\nabla u\|_2\|\sqrt\rho u_t\|_2+\||u|\nabla u\|_2(\|\nabla G\|_2+\|\nabla\omega\|_2)\\
    &\leq&\frac16\left[\frac12\|\sqrt\rho u_t\|_2^2
    +\frac{1}{16\bar\rho}(\|\nabla G\|_2^2+\mu^2\|\nabla\omega\|_2^2)
    \right]+C\bar\rho\||u|\nabla u\|_2^2.
  \end{eqnarray*}
  Recalling (\ref{E2-2-1}), it follows from the H\"older and Young inequalities that
  \begin{eqnarray*}
    I_2&\lesssim&\|\nabla\theta\|_2\|\nabla G\|_2+\|\rho\theta u\|_2\|\nabla G\|_2\\
    &\lesssim&\|\nabla\theta\|_2\|\nabla G\|_2+\sqrt{\bar\rho}\|\rho\|_3^{\frac14}\|\sqrt\rho\theta\|_2^{\frac12}\|\nabla\theta\|_2^{\frac12}
    \|\nabla|u|^2\|_2^{\frac12}\|\nabla G\|_2\\
     &\leq&\frac{\|\nabla G\|_2^2}{96\bar\rho}+C\bar\rho^2\|\rho\|_2^{\frac12}\|\sqrt\rho\theta\|_2(\|\nabla\theta\|_2^2
     +\||u|\nabla u\|_2^2).
  \end{eqnarray*}
  The elliptic estimates and Sobolev embedding inequality yield
  \begin{eqnarray}
    \|\nabla u\|_6&\lesssim&\|\nabla\times u\|_6+\|\text{div}\,u\|_6\lesssim\|\omega\|_6+\|G\|_6+\|\rho\theta\|_6\nonumber\\
    &\lesssim&\|\nabla\omega\|_2+\|\nabla G\|_2+{\bar\rho}\|\nabla\theta\|_2.\label{nu6}
  \end{eqnarray}
  Using (\ref{nu6}), by the H\"older, Sobolev, and Young inequalities, one deduces
  \begin{eqnarray*}
    I_3&\lesssim&\|\nabla u\|_2\|\nabla u\|_6\|G\|_3+\|\nabla u\|_2\|\rho\theta\|_6\|G\|_3\nonumber\\
    &\lesssim& C\|\nabla u\|_2(\|\nabla G\|_2+\|\nabla\omega\|_2+\bar\rho\|\nabla\theta\|_2)\|G\|_2^{\frac12}\\
    &&+\|\nabla G\|_2^{\frac12}\bar\rho\|\nabla u\|_2\|\nabla\theta\|_2\|G\|_2^{\frac12}\|\nabla G\|_2^{\frac12}\\
    &\leq&\frac{1}{96\bar\rho}(\|\nabla G\|_2^2+\mu^2\|\nabla\omega\|_2^2)
    +C\bar\rho^3\|\nabla u\|_2^4\|G\|_2^2\\
    &&+C\bar\rho(\|\nabla\theta\|_2^2+\||u|\nabla u\|_2^2).
  \end{eqnarray*}

  Substituting the estimates for $I_i, i=1,2,3,$ into (\ref{nablau-5}) yields
  \begin{eqnarray}
    \frac{d}{dt}\left( \mu \|\omega\|_2^2+\frac{\|G\|_2^2}{ 2\mu+\lambda }\right)+\frac12\|\sqrt\rho u_t\|_2^2
    +\frac{1}{16\bar\rho}(\|\nabla G\|_2^2+\mu^2\|\nabla\omega\|_2^2)\nonumber\\
    \lesssim (\bar\rho+\bar\rho^2\|\rho\|_3^{\frac12}\|\sqrt\rho\theta\|_2)(\|\nabla\theta\|_2^2+
    \||u|\nabla u\|_2^2)+\bar\rho^3\|\nabla u\|_2^4\|G\|_2^2, \nonumber
  \end{eqnarray}
  from which, integrating in $t$ and using
  \begin{equation*}
  \|\nabla u\|_2\lesssim\|\omega\|_2+\|G\|_2+\|\rho\theta\|_2\lesssim\|\omega\|_2+\|G\|_2+\sqrt{\bar\rho}\|\sqrt\rho\theta\|_2,
  \end{equation*}
  the conclusion follows by straightforward calculations.
\end{proof}

\begin{proposition}
  \label{proprhobdd}
  Assume $$\sup_{0\leq t\leq T}\|\rho\|_\infty\leq4\bar\rho.$$ Then, there is a positive constant $C$ depending only on $R,\gamma,\mu,\lambda,$ and $\kappa$, such that
  \begin{eqnarray*}
    \sup_{0\leq t\leq T}\|\rho\|_\infty&\leq&\|\rho_0\|_\infty e^{C\bar\rho^{\frac23}\sup_{0\leq t\leq T}\|\sqrt\rho u\|_2^{\frac13}\|\sqrt\rho|u|^2\|_2^{\frac13}+C\bar\rho\int_0^T\|\nabla u\|_2\|(\nabla G,\nabla\omega,\sqrt{\bar\rho}\nabla\theta)\|_2dt}.
  \end{eqnarray*}
\end{proposition}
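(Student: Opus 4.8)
The plan is to estimate $\frac{d}{dt}\|\rho\|_\infty$ along particle paths using the continuity equation written in the form $\partial_t\rho + u\cdot\nabla\rho = -\rho\,\text{div}\,u$, which gives, for any fixed point $x$ and its characteristic $X(t;x)$,
\[
\frac{d}{dt}\rho(X(t;x),t) = -\rho(X(t;x),t)\,\text{div}\,u(X(t;x),t),
\]
so that $\|\rho(\cdot,t)\|_\infty \leq \|\rho_0\|_\infty \exp\left(\int_0^t \|\text{div}\,u(\cdot,s)\|_\infty\,ds\right)$. Thus everything reduces to controlling $\int_0^T\|\text{div}\,u\|_\infty\,dt$. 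Since $\text{div}\,u = \frac{G+p}{2\mu+\lambda}$, I would split this as a bound on $\int_0^T\|G\|_\infty\,dt$ and $\int_0^T\|p\|_\infty\,dt$, the second being $\lesssim \int_0^T R\|\rho\|_\infty\|\theta\|_\infty\,dt$; but in fact it is cleaner to handle $p$ together with $G$, or to absorb the pressure contribution into the exponent via the structure already used in Proposition \ref{proprho3}.

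The key step is the $L^\infty$ bound on $G$ (equivalently on $(2\mu+\lambda)\text{div}\,u - p$). Here I would use a Gagliardo–Nirenberg interpolation exactly as in \eqref{rho3-5}, namely $\|G\|_\infty \lesssim \|G\|_6^{1/3}\|\nabla G\|_4^{2/3}$ is too lossy; instead the natural route is $\|G\|_\infty \lesssim \|G\|_2^{\theta}\|\nabla G\|_q^{1-\theta}$ for suitable exponents — but since $\nabla G$ is only controlled in $L^2$ (in space) through Proposition \ref{propnablau}, the honest estimate is the logarithmic Sobolev / Beale–Kato–Majda-type inequality or, more simply, the interpolation
\[
\|G\|_\infty \lesssim \|\nabla u\|_2 + \|\nabla u\|_2^{1/2}\|(\nabla G,\nabla\omega,\sqrt{\bar\rho}\nabla\theta)\|_2^{1/2}
\]
after using $\|G\|_6\lesssim\|\nabla G\|_2$, $\|G\|_2\lesssim\|\nabla u\|_2 + \sqrt{\bar\rho}\|\sqrt\rho\theta\|_2$, and \eqref{nu6}. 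Integrating in time and applying Cauchy–Schwarz produces $\int_0^T\|\text{div}\,u\|_\infty\,dt \lesssim \bar\rho\int_0^T\|\nabla u\|_2\|(\nabla G,\nabla\omega,\sqrt{\bar\rho}\nabla\theta)\|_2\,dt$ plus lower-order terms. The term $\|\rho\|_\infty^{2/3}\|\sqrt\rho u\|_2^{1/3}\|\sqrt\rho|u|^2\|_2^{1/3}$ appearing in the exponent of the conclusion must come from handling the $\Delta^{-1}\text{div}$ pieces: rather than estimating $\|\text{div}\,u\|_\infty$ directly, one goes back to \eqref{eq}, i.e.
\[
(2\mu+\lambda)\,\text{div}\,u = p + \Delta^{-1}\text{div}\,(\rho u)_t + \Delta^{-1}\text{div}\,\text{div}\,(\rho u\otimes u),
\]
and transports $\rho$ against this; the time-derivative term $\Delta^{-1}\text{div}\,(\rho u)_t$ is integrated by parts in $t$ (as in the proof of Proposition \ref{proprho3}), leaving a boundary term $\Delta^{-1}\text{div}\,(\rho u)$ whose $L^\infty$ norm is exactly $\lesssim \bar\rho^{2/3}\|\sqrt\rho u\|_2^{1/3}\|\sqrt\rho|u|^2\|_2^{1/3}$ by \eqref{rho3-5}, hence the $\sup_t$ factor in the exponent rather than a time integral.

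Concretely, the steps are: (1) write the continuity equation along characteristics and reduce to $\int_0^T\|\text{div}\,u\|_\infty\,dt$; (2) use the momentum equation in the form \eqref{eq} to replace $\text{div}\,u$ by $p$, an elliptic term $\Delta^{-1}\text{div}\,(\rho u)_t$, and $\Delta^{-1}\text{div}\,\text{div}\,(\rho u\otimes u)$; (3) integrate the $(\rho u)_t$ term by parts in time against the transported density, producing the $\sup_t \|\Delta^{-1}\text{div}\,(\rho u)\|_\infty$ contribution bounded via \eqref{rho3-5}; (4) bound the remaining running terms by $\|\nabla u\|_2\|(\nabla G,\nabla\omega,\sqrt{\bar\rho}\nabla\theta)\|_2$ using \eqref{nu6}, elliptic estimates for $\Delta^{-1}\text{div}\,\text{div}$, and the Gagliardo–Nirenberg interpolation for $G$; (5) exponentiate. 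The main obstacle is step (3)–(4): one must organize the time-integration-by-parts so that no uncontrolled term like $\|\rho_t\|$ or $\|\nabla\text{div}\,u\|_\infty$ appears, and so that the elliptic/commutator estimates for $\Delta^{-1}\text{div}$ acting on $(\rho u)$ and $(\rho u\otimes u)$ land precisely in the norms $\|\sqrt\rho u\|_2$, $\|\sqrt\rho|u|^2\|_2$, $\|\nabla u\|_2$ that the earlier propositions control — exactly mirroring the bookkeeping already carried out in Proposition \ref{proprho3}, but now tracking the $L^\infty$ norm of the transported quantity instead of its $L^3$ norm.
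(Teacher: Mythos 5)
Your final plan, steps (1)--(5), is essentially the paper's proof: transport $\log\rho$ along particle paths via $\frac{d}{dt}\log\rho(X(x,t),t)=-\text{div}\,u(X(x,t),t)$, substitute the momentum equation in the form (\ref{eq}), rewrite $\partial_t\Delta^{-1}\text{div}\,(\rho u)$ as a material derivative so that the running term becomes the commutator $[u,\mathcal R\otimes\mathcal R](\rho u)=u\cdot\nabla\Delta^{-1}\text{div}\,(\rho u)-\Delta^{-1}\text{div}\,\text{div}\,(\rho u\otimes u)$ and the ``boundary'' contribution is $\sup_t\|\Delta^{-1}\text{div}\,(\rho u)\|_\infty$, controlled by (\ref{rho3-5}); drop $p\geq0$ and exponentiate. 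The paper then bounds the commutator in $L^\infty$ by $\|f\|_\infty\lesssim\|f\|_3^{1/5}\|\nabla f\|_4^{4/5}$ combined with Coifman--Meyer-type commutator estimates, arriving at $\bar\rho\|\nabla u\|_2\|\nabla u\|_6$ and then invoking (\ref{nu6}). So the architecture you converge on is the right one.

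However, the first half of your write-up, and the residual appeal to a ``Gagliardo--Nirenberg interpolation for $G$'' in step (4), contain a step that would fail if relied upon: the inequality $\|G\|_\infty\lesssim\|\nabla u\|_2+\|\nabla u\|_2^{1/2}\|(\nabla G,\nabla\omega,\sqrt{\bar\rho}\nabla\theta)\|_2^{1/2}$ is not a valid interpolation in $\mathbb R^3$, since $H^1(\mathbb R^3)\not\hookrightarrow L^\infty$ and $\|G\|_2^{1/2}\|\nabla G\|_2^{1/2}$ scales like an $L^3$ norm, not an $L^\infty$ norm. Proposition \ref{propnablau} only gives $\nabla G,\nabla\omega\in L^2_{t,x}$, so any route passing through $\int_0^T\|\text{div}\,u\|_\infty\,dt$ or $\int_0^T\|G\|_\infty\,dt$ is dead. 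The whole point of the paper's manipulation is that $(2\mu+\lambda)\text{div}\,u$ cancels exactly against $(2\mu+\lambda)\frac{D}{Dt}\log\rho$, so no $L^\infty$ control of $\text{div}\,u$, $G$, or $p$ is ever needed; the only quantities measured in $L^\infty$ are $\Delta^{-1}\text{div}\,(\rho u)$ (which gains a full derivative) and the commutator (which gains a derivative through its commutator structure, the single estimate your sketch leaves implicit). Strike the $\|G\|_\infty$ discussion entirely and make the commutator bound explicit; with that, your outline coincides with the paper's argument.
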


\begin{proof}
  Denote $\mathcal O=\{x\in\mathbb R^3|\rho_0(x)=0\}$ and $\Omega=\{x\in\mathbb R^3|\rho_0(x)>0\}$. Let $X(x,t)$ be the particle path starting from $x$ and govern by the velocity field $u$, that is
  $$
  \partial_tX(x,t)=u(X(x,t),t),\quad X(x,0)=x.
  $$
  Then $\rho(X(x,t),t)\equiv0$, for any $x\in\mathcal O$, and $\rho(X(x,t),t)>0$, for any $x\in\Omega$.
  One can verify that $\{X(x,t)|x\in\mathbb R^3\}=\mathbb R^3$, for any $t\in(0,T)$. Therefore
  \begin{equation}
  \sup_{x\in\mathbb R^3}\rho(x,t)=\sup_{x\in\mathbb R^3}\|\rho(X(x,t),t)\|_\infty=\sup_{x\in\Omega}\rho(X(x,t),t).\label{ubdid}
  \end{equation}
  Rewrite (\ref{eq}) as
  \begin{eqnarray}
    \label{eq'}
    &&\partial_t\Delta^{-1}\text{div}\,(\rho u)+u\cdot\nabla\Delta^{-1}\text{div}\,\text{div}\,(\rho u)-(2\mu+\lambda)\text{div}\,u+p \nonumber\\
    &=&u\cdot\nabla\Delta^{-1}\text{div}\,\text{div}\,(\rho u)-\Delta^{-1}\text{div}\,\text{div}\,(\rho u\otimes u)
    =[u,\mathcal R\otimes\mathcal R](\rho u),
  \end{eqnarray}
  where $\mathcal R$ is the Riesz transform on $\mathbb R^3$.
  Using the fact $\frac{d}{dt}(f(X(x,t),t))=(\partial_t f+u\cdot\nabla f)(X(x,t),t)$, it follows from (\ref{EQrho}) that
  $$
  \frac{d}{dt}(\log\rho(X(x,t),t))=-\text{div}\,u(X(x,t),t), \quad\forall x\in\Omega.
  $$
  Therefore, for any $x\in\Omega,$ it follows from (\ref{eq'}) that
  \begin{align*}
  \frac{d}{dt}\Big((2\mu+\lambda)\log\rho(X(x,t),t)+(\Delta^{-1}\text{div}\,(\rho u))(X(x,t),t)\Big)\nonumber\\
  +p(X(x,t),t)
  =\Big([u,\mathcal R\otimes\mathcal R](\rho u)\Big)
  (X(x,t),t). \nonumber
  \end{align*}
  Due to $p\geq0$ and (\ref{ubdid}), one can easily derive from the above equality that
  \begin{equation}
    \|\rho\|_\infty\leq\|\rho_0\|_\infty e^{C \left( \sup_{0\leq t\leq T}\|\Delta^{-1}\text{div}\,(\rho u)\|_\infty+\int_0^T\|[u,\mathcal R\otimes\mathcal R](\rho u)\|_\infty dt\right)}.\label{ubdrho-1}
  \end{equation}
  Using the Gagliardo-Nirenberg inequality and the commutator estimates, one deduces
  \begin{align*}
    \|[u,\mathcal R\otimes\mathcal R]&(\rho u)\|_\infty\lesssim\|[u,\mathcal R\otimes\mathcal R](\rho u)\|_3^{\frac15}\|\nabla
    [u,\mathcal R\otimes\mathcal R](\rho u)\|_4^{\frac45}\\
    \lesssim&\,\,\,\,\|u\|_6^{\frac15}\|\rho u\|_6^{\frac15}\|\nabla u\|_6^{\frac45}\|\rho u\|_{12}^{\frac45}
    \lesssim\bar\rho\|u\|_6^{\frac15}\|u\|_6^{\frac15}\|\nabla u\|_6^{\frac45}\left(\|u\|_6^{\frac34}\|\nabla u\|_6^{\frac14}\right)^{\frac45}\\
    \lesssim&\,\,\,\,\bar\rho\|\nabla u\|_2\|\nabla u\|_6\lesssim\bar\rho\|\nabla u\|_2(\|\nabla G\|_2+\|\nabla\omega\|_2+\bar\rho\|\nabla\theta\|_2),
  \end{align*}
  where, in the last step, (\ref{nu6}) has been used. Thanks to this and recalling (\ref{rho3-5}), the conclusion follows from (\ref{ubdrho-1}).
\end{proof}

\subsection{A priori estimates}
\begin{proposition}
\label{propkey}
Assume that $2\mu>\lambda$. Denote
$$
\mathscr N_T=\bar\rho\sup_{0\leq t\leq T}(\|\rho\|_3+\bar\rho^2\|\sqrt{\rho}u\|_2^2)(t)\sup_{0\leq t\leq T}(\|\nabla u\|_2^2+
  \bar\rho\|\sqrt{\rho}E\|_2^2)(t).
$$
Then, there is a positive constant $\eta_0$ depending only on $R,\gamma,\mu,\lambda,$ and $\kappa$, such that if
$$
\eta\leq\eta_0,\quad \sup_{0\leq t\leq T}\|\rho\|_\infty\leq 4\bar\rho,\quad\mbox{and}\quad\mathscr N_T\leq\sqrt\eta,
$$
then the following estimates hold
\begin{eqnarray*}
\sup_{0\leq t\leq T}\|\sqrt\rho E\|_2^2+\int_0^T\|(\nabla\theta,|u|\nabla u)\|_2^2dt&\leq& C\|\sqrt{\rho_0}E_0\|_2^2,\\
\sup_{0\leq t\leq T}\|\rho\|_3+\left(\int_0^T\int\rho^3pdxdt\right)^{\frac13}
&\leq& C(\|\rho_0\|_3+\bar\rho^2\|\sqrt{\rho_0}u_0\|_2^2),\\
\bar\rho^2\left(\sup_{0\leq t\leq T}\|\sqrt\rho u\|_2^2
  + \int_0^T\|\nabla u\|_2^2dt \right)
  &\leq&C(\|\rho_0\|_3+\bar\rho^2\|\sqrt{\rho_0}u_0\|_2^2),\\
\sup_{0\leq t\leq T}\|\nabla u\|_2^2+\int_0^T\left\|\left(\sqrt\rho u_t,\frac{\nabla G}{\sqrt{\bar\rho}},\frac{\nabla\omega}
{\sqrt{\bar\rho}}\right)\right\|_2^2dt&\leq& C(\|\nabla u_0\|_2^2+\bar\rho\|\sqrt{\rho_0}E_0\|_2^2), \\
\sup_{0\leq t\leq T}\|\rho\|_\infty &\leq& \bar\rho e^{C\mathscr N_0^{\frac16}+C\mathscr N_0^{\frac12}},
\end{eqnarray*}
for a positive constant $C$ depending only on $R,\gamma,\mu,\lambda,$ and $\kappa$,
where
$$
\mathscr N_0=\bar\rho(\|\rho_0\|_3+\bar\rho^2\|\sqrt{\rho_0}u_0\|_2^2)(\|\nabla u_0\|_2^2+
  \bar\rho\|\sqrt{\rho_0}E_0\|_2^2).
$$
\end{proposition}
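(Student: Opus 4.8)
The plan is to chain Propositions \ref{propE2.1}--\ref{proprhobdd} in a fixed order, at each stage using the structural smallness $\mathscr N_T\le\sqrt\eta$ and the running hypothesis $\sup_{[0,T]}\|\rho\|_\infty\le4\bar\rho$ to absorb every genuinely nonlinear term into the dissipation. Three elementary observations feed into every step. Since $\theta\ge0$, the pointwise bounds $c_v\theta\le E$ and $|u|^2\le2E$ give $\|\sqrt\rho\theta\|_2\lesssim\|\sqrt\rho E\|_2$ and $\|\sqrt\rho|u|^2\|_2\le2\|\sqrt\rho E\|_2$. Monotonicity of the supremum gives $\|\rho_0\|_3\le\sup\|\rho\|_3$, $\|\sqrt{\rho_0}u_0\|_2^2\le\sup\|\sqrt\rho u\|_2^2$, $\|\sqrt{\rho_0}E_0\|_2^2\le\sup\|\sqrt\rho E\|_2^2$, and so on. Finally, since $\sup(f+g)\ge\max\{\sup f,\sup g\}$ and $\sup h\ge h(0)$ for nonnegative $f,g,h$, the defining product of $\mathscr N_T$ dominates each of $\bar\rho^2\sup\|\rho\|_3\sup\|\sqrt\rho E\|_2^2$, $\bar\rho^4\sup\|\sqrt\rho u\|_2^2\sup\|\sqrt\rho E\|_2^2$, $\bar\rho\sup\|\rho\|_3\sup\|\nabla u\|_2^2$, $\bar\rho\,\mathcal I_0\sup\|\nabla u\|_2^2$ (with $\mathcal I_0:=\|\rho_0\|_3+\bar\rho^2\|\sqrt{\rho_0}u_0\|_2^2$), and $\mathscr N_0$ itself; hence all of these are $\le\mathscr N_T\le\sqrt\eta$. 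This is the only role $\eta$ plays.

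First I would prove the energy estimate. In Proposition \ref{propE2.2} the error coefficient obeys, uniformly in $t$, $\|\rho\|_\infty\|\rho\|_3^{1/2}\|\sqrt\rho\theta\|_2\lesssim\bigl(\bar\rho^2\sup\|\rho\|_3\sup\|\sqrt\rho E\|_2^2\bigr)^{1/2}\lesssim\eta^{1/4}$, so for $\eta_0$ small the last integral is absorbed, giving the first asserted estimate. Next I would couple Propositions \ref{propE2.1} and \ref{proprho3}: inserting $\int\|\nabla\theta\|_2^2\lesssim\|\sqrt{\rho_0}E_0\|_2^2$ into Proposition \ref{propE2.1} yields $\sup\|\sqrt\rho u\|_2^2+\int\|\nabla u\|_2^2\lesssim\|\sqrt{\rho_0}u_0\|_2^2+(\sup\|\rho\|_3)^2\|\sqrt{\rho_0}E_0\|_2^2$; feeding this into Proposition \ref{proprho3}, the first term on its right side is $\lesssim(\bar\rho^4\sup\|\sqrt\rho u\|_2^2\sup\|\sqrt\rho E\|_2^2)^{1/6}(\sup\|\rho\|_3)^3\lesssim\eta^{1/12}(\sup\|\rho\|_3)^3$ and is absorbed, the term $\bar\rho^2(\sup\|\rho\|_3)^4\|\sqrt{\rho_0}E_0\|_2^2=(\sup\|\rho\|_3)^3\cdot\bar\rho^2\sup\|\rho\|_3\|\sqrt{\rho_0}E_0\|_2^2\lesssim\sqrt\eta(\sup\|\rho\|_3)^3$ is absorbed, and the remaining $\bar\rho^2(\sup\|\rho\|_3)^2\|\sqrt{\rho_0}u_0\|_2^2$ is split by Young's inequality into $\epsilon(\sup\|\rho\|_3)^3+C_\epsilon(\bar\rho^2\|\sqrt{\rho_0}u_0\|_2^2)^3$; this gives the second asserted estimate $\sup\|\rho\|_3+(\int_0^T\!\!\int\rho^3p\,dx\,dt)^{1/3}\lesssim\mathcal I_0$, and substituting $\sup\|\rho\|_3\lesssim\mathcal I_0$ back, after multiplying by $\bar\rho^2$ and again using $\mathscr N_T\le\sqrt\eta$, the third.

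Then I would handle $\nabla u$ via Proposition \ref{propnablau}. There $\bar\rho\sup\|\sqrt\rho\theta\|_2^2\lesssim\bar\rho\|\sqrt{\rho_0}E_0\|_2^2$ by the first estimate; since $\bar\rho^2\|\rho\|_3^{1/2}\|\sqrt\rho\theta\|_2\lesssim\bar\rho\eta^{1/4}\lesssim\bar\rho$, the last error integral is $\lesssim\bar\rho\int\|(\nabla\theta,|u|\nabla u)\|_2^2\lesssim\bar\rho\|\sqrt{\rho_0}E_0\|_2^2$; and the cubic term is treated by peeling off $\|\nabla u\|_2^4\le(\sup\|\nabla u\|_2^2)\|\nabla u\|_2^2$ together with $\|(\nabla u,\sqrt{\bar\rho}\sqrt\rho\theta)\|_2^2\lesssim\|\nabla u\|_2^2+\bar\rho\sup\|\sqrt\rho E\|_2^2$, so that, using $\int\|\nabla u\|_2^2\lesssim\bar\rho^{-2}\mathcal I_0$ from the third estimate and $\bar\rho\,\mathcal I_0\sup\|\nabla u\|_2^2\le\mathscr N_T\le\sqrt\eta$, one gets $\bar\rho^3\int\|\nabla u\|_2^4\|(\nabla u,\sqrt{\bar\rho}\sqrt\rho\theta)\|_2^2\lesssim\sqrt\eta\sup\|\nabla u\|_2^2+\sqrt\eta\,\bar\rho\|\sqrt{\rho_0}E_0\|_2^2$, the first summand absorbed. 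This gives the fourth asserted estimate. Finally, for the bound on $\sup\|\rho\|_\infty$, the two terms in the exponent of Proposition \ref{proprhobdd} are controlled, using the first, third and fourth estimates together with $\|\sqrt\rho|u|^2\|_2\le2\|\sqrt\rho E\|_2$ and Cauchy--Schwarz in time, by $\bigl(\bar\rho^4\sup\|\sqrt\rho u\|_2^2\sup\|\sqrt\rho E\|_2^2\bigr)^{1/6}\lesssim\mathscr N_0^{1/6}$ and by a term of size $\mathscr N_0^{1/2}$; since $\|\rho_0\|_\infty\le\bar\rho$, the last assertion follows.

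The threshold $\eta_0$ is the smallest of the finitely many smallness requirements imposed in the absorption steps, and each such requirement involves only constants from the H\"older, Sobolev, Young and elliptic estimates, hence depends only on $R,\gamma,\mu,\lambda,\kappa$. I expect the main difficulty to be organizational rather than analytic: the five estimates are genuinely interlocked, so they can only be closed in the above order (energy $\to$ $\|\rho\|_3$ and $\|\sqrt\rho u\|_2$ $\to$ $\|\nabla u\|_2$ $\to$ $\|\rho\|_\infty$), and the delicate manipulation is the handling of $\bar\rho^3\int\|\nabla u\|_2^4\|(\nabla u,\sqrt{\bar\rho}\sqrt\rho\theta)\|_2^2$ in Proposition \ref{propnablau}, where one must first remove a factor $\sup\|\nabla u\|_2^2$ and only then spend the already-proven bound $\int\|\nabla u\|_2^2\lesssim\bar\rho^{-2}\mathcal I_0$ and the smallness of the sub-product $\bar\rho\,\mathcal I_0\sup\|\nabla u\|_2^2$; everything else is routine bookkeeping with H\"older, Sobolev, Young and absorption.
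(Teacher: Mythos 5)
Your proposal is correct and follows essentially the same route as the paper: chain Propositions \ref{propE2.2}, \ref{propE2.1}, \ref{proprho3}, \ref{propnablau}, \ref{proprhobdd} in that order, identify the same sub-products of $\mathscr N_T$ (namely $\bar\rho^2\sup\|\rho\|_3\sup\|\sqrt\rho E\|_2^2$, $\bar\rho^4\sup\|\sqrt\rho u\|_2^2\sup\|\sqrt\rho E\|_2^2$, $\bar\rho\,\mathcal I_0\sup(\|\nabla u\|_2^2+\bar\rho\|\sqrt\rho E\|_2^2)$) as the coefficients to be made small, and close each estimate by absorption plus a Young splitting of $\bar\rho^2(\sup\|\rho\|_3)^2\|\sqrt{\rho_0}u_0\|_2^2$. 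The only (immaterial) deviations from the paper's write-up are bookkeeping choices, e.g.\ splitting the cubic term in Proposition \ref{propnablau} into two pieces rather than bounding it as a single product.
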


\begin{proof}
By assumptions, it follows from Proposition \ref{propE2.2} that
\begin{eqnarray*}
  &&\sup_{0\leq t\leq T}\|\sqrt\rho E\|_2^2+\int_0^T(\|\nabla\theta\|_2^2+\||u|\nabla u\|_2^2)dt\nonumber\\
  &\leq&C\|\sqrt{\rho_0}E_0\|_2^2+C\eta_0^{\frac14}\int_0^T(\|\nabla\theta\|_2^2+\||u|\nabla u\|_2^2)dt,
\end{eqnarray*}
which by choosing $\eta_0$ suitably small implies
\begin{equation}
  \label{EST1}
  \sup_{0\leq t\leq T}\|\sqrt\rho E\|_2^2+\int_0^T(\|\nabla\theta\|_2^2+\||u|\nabla u\|_2^2)dt\leq C\|\sqrt{\rho_0}E_0\|_2^2.
\end{equation}
Thanks to (\ref{EST1}) and applying Proposition \ref{propE2.1}, one obtains
\begin{eqnarray}
   \sup_{0\leq t\leq T}\|\sqrt\rho u\|_2^2+\int_0^T\|\nabla u\|_2^2dt
  &\leq&C\|\sqrt{\rho_0}u_0\|_2^2+C\|\sqrt{\rho_0}E_0\|_2^2\sup_{0\leq t
  \leq T}\|\rho\|_3^2\nonumber\\
  &\leq&C\|\sqrt{\rho_0}u_0\|_2^2+C\sup_{0\leq t
  \leq T}\|\sqrt{\rho}E\|_2^2\sup_{0\leq t
  \leq T}\|\rho\|_3^2\nonumber\\
  &\leq&C\|\sqrt{\rho_0}u_0\|_2^2+\frac{C\sqrt{\eta_0}}{\bar\rho^2}\sup_{0\leq t
  \leq T}\|\rho\|_3. \label{EST1'}
\end{eqnarray}
Using the assumptions and (\ref{EST1'}), it follows from Proposition \ref{proprho3} and the Young inequality that
\begin{eqnarray*}
  &&\sup_{0\leq t\leq T}\|\rho\|_3^3+\int_0^T\int\rho^3pdxdt\\
  &\leq&C\|\rho_0\|_3^3+C\eta_0^{\frac{1}{12}}\sup_{0\leq t\leq T}\|\rho_0\|_3^3+C\bar\rho^2\left(\|\sqrt{\rho_0}u_0\|_2^2+\frac{\sqrt{\eta_0}}{\bar\rho^2}\sup_{0\leq t
  \leq T}\|\rho\|_3\right)\sup_{0\leq
  t\leq T}\|\rho\|_3^2\nonumber\\
  &\leq&C\|\rho_0\|_3^3+\left(C\eta_0^{\frac{1}{12}}+\frac14+C\sqrt{\eta_0}\right)\sup_{0\leq t\leq T}\|\rho\|_3^3
  +C\bar\rho^6\|\sqrt{\rho_0}u_0\|_2^6,
\end{eqnarray*}
from which, by choosing $\eta_0$ sufficiently small, one obtains
\begin{equation}
  \label{EST2}
  \sup_{0\leq t\leq T}\|\rho\|_3+\left(\int_0^T\int\rho^3pdxdt\right)^{\frac13}
  \leq  C(\|\rho_0\|_3+\bar\rho^2\|\sqrt{\rho_0}u_0\|_2^2).
\end{equation}
Combing (\ref{EST1'}) with (\ref{EST2}) yields
\begin{equation}
  \label{EST3}
  \bar\rho^2\left(\sup_{0\leq t\leq T}\|\sqrt\rho u\|_2^2
  + \int_0^T\|\nabla u\|_2^2dt \right)
  \leq C(\|\rho_0\|_3+\bar\rho^2\|\sqrt{\rho_0}u_0\|_2^2).
\end{equation}
Using (\ref{EST1}) and (\ref{EST3}), it follows from Proposition \ref{propnablau} that
\begin{eqnarray}
  &&\sup_{0\leq t\leq T}\|\nabla u\|_2^2+\int_0^T\left\|\left(\sqrt\rho u_t,\frac{\nabla G}{\sqrt{\bar\rho}},\frac{\nabla\omega}
{\sqrt{\bar\rho}}\right)\right\|_2^2dt\nonumber\\
&\lesssim&\|\nabla u_0\|_2^2+\bar\rho\|\sqrt{\rho_0}E_0\|_2^2+\bar\rho^3\int_0^T\|\nabla u\|_2^2dt
\sup_{0\leq t\leq T}\left(\|\nabla u\|_2^2+\bar\rho\|\sqrt\rho\theta\|_2^2\right)\nonumber\\
&&\times\sup_{0\leq t\leq T}\|\nabla u\|_2^2+ \left(\bar\rho+\bar\rho^2\sup_{0\leq t\leq T}\|\rho\|_3^{\frac12}\|\sqrt\rho\theta\|_2\right) \int_0^T\|(\nabla\theta,|u|\nabla u)\|_2^2dt\nonumber\\
&\lesssim&\|\nabla u_0\|_2^2+\bar\rho\|\sqrt{\rho_0}E_0\|_2^2+ \bar\rho(\|\rho_0\|_3+\bar\rho^2\|\sqrt{\rho_0}u_0\|_2^2)
\nonumber\\
&&\times\sup_{0\leq t\leq T}\left(\|\nabla u\|_2^2+\bar\rho\|\sqrt\rho E\|_2^2\right)\sup_{0\leq t\leq T}\|\nabla u\|_2^2\nonumber\\
&&+ \bar\rho^2\sup_{0\leq t\leq T}\|\rho\|_3^{\frac12}\|\sqrt\rho\theta\|_2\|\sqrt{\rho_0}E_0\|_2^2.
\label{EST3-1}
  \end{eqnarray}
Recalling the definition of $\mathscr N_T$ and the assumption that $\mathscr N_T\leq\sqrt{\eta_0}$, it is clear that
\begin{eqnarray*}
  &&\bar\rho(\|\rho_0\|_3+\bar\rho^2\|\sqrt{\rho_0}u_0\|_2^2)
\sup_{0\leq t\leq T}\left(\|\nabla u\|_2^2+\bar\rho\|\sqrt\rho E\|_2^2\right) \\
&\leq&\bar\rho\sup_{0\leq t\leq T}(\|\rho\|_3+\bar\rho^2\|\sqrt{\rho}u\|_2^2)
\sup_{0\leq t\leq T}\left(\|\nabla u\|_2^2+\bar\rho\|\sqrt\rho E\|_2^2\right) \leq \mathscr N_T\leq\sqrt{\eta_0}
\end{eqnarray*}
and
\begin{eqnarray*}
  \bar\rho \sup_{0\leq t\leq T}\|\rho\|_3^{\frac12}\|\sqrt\rho\theta\|_2\leq\left(\bar\rho^2\sup_{0\leq t\leq T}\|\rho\|_3\sup_{0\leq t
  \leq T}\|\sqrt\rho E\|_2^2\right)^{\frac12}\leq\mathscr N_T^{\frac12}\leq\eta_0^{\frac14}.
\end{eqnarray*}
Thanks to the above two estimates, by choosing $\eta_0$ sufficiently small, one can easily derive from (\ref{EST3-1}) that
\begin{equation}
  \sup_{0\leq t\leq T}\|\nabla u\|_2^2+\int_0^T\left\|\left(\sqrt\rho u_t,\frac{\nabla G}{\sqrt{\bar\rho}},\frac{\nabla\omega}
{\sqrt{\bar\rho}}\right)\right\|_2^2dt \leq C(\|\nabla u_0\|_2^2+\bar\rho\|\sqrt{\rho_0}E_0\|_2^2). \label{EST4}
\end{equation}
The estimate for $\|\rho\|_\infty$ follows from Proposition \ref{proprhobdd} by using (\ref{EST1}), (\ref{EST3}), and (\ref{EST4}).
\end{proof}

\begin{proposition}
\label{bridge}
Assume that $2\mu>\lambda$. Let $\eta_0$, $\mathscr N_T$, and $\mathscr N_0$ be as in Proposition \ref{propkey}. Then, the following two hold:

(i) There is a number $\varepsilon_0\in(0,\eta_0)$ depending only on $R, \gamma, \mu, \lambda,$ and $\kappa,$ such that
if
$$
\sup_{0\leq t\leq T}\|\rho\|_\infty\leq 4\bar\rho,\quad  \mathscr N_T\leq\sqrt{\varepsilon_0},\quad and \quad \mathscr N_0\leq\varepsilon_0.
$$
then
$$
\sup_{0\leq t\leq T}\|\rho\|_\infty\leq 2\bar\rho\quad\mbox{and}\quad \mathscr N_T\leq\frac{\sqrt{\varepsilon_0}}{2}.
$$

(ii) As a consequence of (i), the following estimates hold
$$
\mathscr N_{T}\leq\frac{\sqrt{\varepsilon_0}}{2} \quad\mbox{and}\quad \sup_{0\leq t\leq T}\|\rho\|_\infty\leq 2\bar\rho,
$$
as long as $\mathscr N_0\leq\varepsilon_0$.
\end{proposition}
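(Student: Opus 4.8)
The plan is to deduce (i) directly from Proposition~\ref{propkey} by bookkeeping, and then to obtain (ii) from (i) by a continuity (bootstrap) argument in time.

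For part (i), fix $\varepsilon_0\in(0,\eta_0)$ with $\varepsilon_0<1$, to be determined below, and assume $\sup_{0\le t\le T}\|\rho\|_\infty\le 4\bar\rho$, $\mathscr N_T\le\sqrt{\varepsilon_0}$, and $\mathscr N_0\le\varepsilon_0$. Since $\varepsilon_0\le\eta_0$ and $\mathscr N_T\le\sqrt{\varepsilon_0}$, Proposition~\ref{propkey} applies with $\eta=\varepsilon_0$, producing its five estimates. Using $\sup(f+g)\le\sup f+\sup g$, the second and third of them give $\sup_{0\le t\le T}(\|\rho\|_3+\bar\rho^2\|\sqrt\rho u\|_2^2)\le C(\|\rho_0\|_3+\bar\rho^2\|\sqrt{\rho_0}u_0\|_2^2)$, and the first and fourth give $\sup_{0\le t\le T}(\|\nabla u\|_2^2+\bar\rho\|\sqrt\rho E\|_2^2)\le C(\|\nabla u_0\|_2^2+\bar\rho\|\sqrt{\rho_0}E_0\|_2^2)$; multiplying these two bounds and by $\bar\rho$ yields $\mathscr N_T\le C_0\mathscr N_0$ for a constant $C_0$ depending only on $R,\gamma,\mu,\lambda,\kappa$. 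Together with $\mathscr N_0\le\varepsilon_0$, this gives $\mathscr N_T\le C_0\varepsilon_0=C_0\sqrt{\varepsilon_0}\cdot\sqrt{\varepsilon_0}$, while the fifth estimate gives $\sup_{0\le t\le T}\|\rho\|_\infty\le\bar\rho\,e^{C(\varepsilon_0^{1/6}+\varepsilon_0^{1/2})}$. Now choose $\varepsilon_0$ small enough, depending only on the parameters, that $C_0\sqrt{\varepsilon_0}\le\tfrac12$ and $C(\varepsilon_0^{1/6}+\varepsilon_0^{1/2})\le\log 2$; then $\mathscr N_T\le\tfrac12\sqrt{\varepsilon_0}$ and $\sup_{0\le t\le T}\|\rho\|_\infty\le 2\bar\rho$, which proves (i).

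For part (ii), keep $\varepsilon_0$ as in (i) and assume $\mathscr N_0\le\varepsilon_0$. By the regularity in Proposition~\ref{LOCAL}, the maps $t\mapsto\sup_{0\le s\le t}\|\rho(s)\|_\infty$ and $t\mapsto\mathscr N_t$ are finite, nondecreasing, and continuous on $[0,T]$. Let $T_\star$ be the supremum of the set of $t\in[0,T]$ for which $\sup_{0\le s\le t}\|\rho(s)\|_\infty\le 4\bar\rho$ and $\mathscr N_t\le\sqrt{\varepsilon_0}$. At $t=0$ these conditions read $\|\rho_0\|_\infty\le\bar\rho<4\bar\rho$ and $\mathscr N_0\le\varepsilon_0<\sqrt{\varepsilon_0}$, so $T_\star>0$, and by continuity the set is closed, hence both bounds hold on $[0,T_\star]$. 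Applying (i) on $[0,T_\star]$ gives the strict improvements $\sup_{0\le s\le T_\star}\|\rho(s)\|_\infty\le 2\bar\rho$ and $\mathscr N_{T_\star}\le\tfrac12\sqrt{\varepsilon_0}$. If $T_\star<T$, continuity then forces $\sup_{0\le s\le t}\|\rho(s)\|_\infty\le 3\bar\rho<4\bar\rho$ and $\mathscr N_t\le\tfrac34\sqrt{\varepsilon_0}<\sqrt{\varepsilon_0}$ for all $t$ in a right neighbourhood of $T_\star$, contradicting the definition of $T_\star$. Therefore $T_\star=T$, so $\sup_{0\le t\le T}\|\rho\|_\infty\le 4\bar\rho$ and $\mathscr N_T\le\sqrt{\varepsilon_0}$, and one final application of (i) on $[0,T]$ gives $\mathscr N_T\le\tfrac12\sqrt{\varepsilon_0}$ and $\sup_{0\le t\le T}\|\rho\|_\infty\le 2\bar\rho$, which is (ii).

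There is no serious analytic difficulty here: all the hard work is already in Proposition~\ref{propkey}, and what remains is pure bookkeeping plus a continuity step. The two points demanding care are that the improvement in (i) is \emph{strict} — the factor $\tfrac12$ and the passage $4\bar\rho\to 2\bar\rho$ — which is exactly what makes the continuity argument close, and that $t\mapsto\mathscr N_t$ and $t\mapsto\sup_{[0,t]}\|\rho\|_\infty$ are continuous, which is immediate from the function space in which the local solution lives.
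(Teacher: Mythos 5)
Your proposal is correct and follows essentially the same route as the paper: part (i) is obtained by combining the five estimates of Proposition \ref{propkey} into the bound $\mathscr N_T\leq C\mathscr N_0$ together with the exponential bound on $\|\rho\|_\infty$, and then choosing $\varepsilon_0$ small; part (ii) is the same continuity/bootstrap argument the paper runs with its maximal time $T_\#$. Your version is, if anything, slightly more explicit about why the defining set of $T_\star$ is closed and why the strict improvement $\sqrt{\varepsilon_0}\to\tfrac12\sqrt{\varepsilon_0}$, $4\bar\rho\to2\bar\rho$ lets the argument close, but there is no substantive difference.
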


\begin{proof}
(i) Let $\varepsilon_0\leq\eta_0$ be sufficiently small. By assumptions, all the conditions in Proposition \ref{propkey} hold, and, thus
\begin{eqnarray*}
\mathscr N_T&\leq&   C\bar\rho(\|\rho_0\|_3+\bar\rho^2\|\sqrt{\rho_0}u_0\|_2^2)(\|\nabla u_0\|_2^2+
  \bar\rho\|\sqrt{\rho_0}E_0\|_2^2)\\
  &=&C\mathscr N_0\leq C\varepsilon_0\leq\frac{\sqrt{\varepsilon_0}}{2},
\end{eqnarray*}
and
\begin{eqnarray*}
  \sup_{0\leq t\leq T}\|\rho\|_\infty &\leq& \bar\rho e^{C\mathscr N_0^{\frac16}+C\mathscr N_0^{\frac12}}
   \leq  \bar\rho e^{C\varepsilon_0^{\frac16}+C\varepsilon_0^{\frac12}}\leq 2\bar\rho,
\end{eqnarray*}
as long as $\varepsilon_0$ is sufficiently small. The first conclusion follows.

(ii) Define
$$
T_\#:=\max\left\{\mathcal T\in(0,T]\bigg|\mathscr N_\mathcal T\leq\sqrt{\varepsilon_0}, \sup_{0\leq t\leq \mathcal T}\|\rho\|_\infty\leq 4\bar\rho\right\}.
$$
Then, by (i), we have
\begin{equation}\label{6.8-1}
\mathscr N_\mathcal T\leq\frac{\sqrt{\varepsilon_0}}{2}, \qquad \sup_{0\leq t\leq \mathcal T}\|\rho\|_\infty\leq 2\bar\rho,\quad \forall \mathcal T\in(0,T_\#).
\end{equation}
If $T_\#<T$, noticing that $\mathscr N_\mathcal T$ and $\sup_{0\leq t\leq \mathcal T}\|\rho\|_\infty$ are continuous on $[0,T]$,
there is another time $T_{\#\#}\in(T_\#,T]$, such that
$$
\mathscr N_{T_{\#\#}}\leq{\sqrt{\varepsilon_0}} \qquad\mbox{and}\quad \sup_{0\leq t\leq T_{\#\#}}\|\rho\|_\infty\leq 4\bar\rho,
$$
which contradicts to the definition of $T_\#$. Thus, we
have $T_\#=T$, and
the conclusion follows from (\ref{6.8-1}) and the continuity of $\mathscr N_\mathcal T$ and $\sup_{0\leq t\leq \mathcal T}\|\rho\|_\infty$ on $[0,T]$.
\end{proof}

The following corollary is a straightforward consequence of Proposition \ref{propkey} and (ii) of Proposition \ref{bridge}.

\begin{corollary}
\label{CorKey}
Assume that $2\mu>\lambda$. Let $\varepsilon_0$ be as in Proposition \ref{bridge} and assume $\mathscr N_0\leq\varepsilon_0$. Then, there is a positive constant $C$ depending only on $R$, $\gamma$, $\mu$, $\lambda$, $\kappa$, $\bar\rho$, $\|\rho_0\|_3$, $\|\sqrt{\rho_0}u_0\|_2$, $\|\sqrt{\rho_0}E_0\|_2,$ and $\|\nabla u_0\|_2$, such that the following estimates hold:
\begin{eqnarray*}
\sup_{0\leq t\leq T}(\|(\sqrt\rho E,\sqrt\rho u,\nabla u)\|_2^2+\|\rho\|_3+\|\rho\|_\infty) \leq C,\\
\int_0^T\left(\|\left(\nabla\theta,|u|\nabla u, \sqrt\rho u_t, \nabla G, \nabla\omega\right)\|_2^2+\|\nabla u\|_6^2+\int\rho^3pdx\right)dt\leq C.
\end{eqnarray*}
\end{corollary}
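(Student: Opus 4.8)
The plan is to read off all the stated bounds directly from Proposition \ref{propkey} and Proposition \ref{bridge}(ii), after checking that the hypotheses of Proposition \ref{propkey} are in force on the whole interval $[0,T]$. First I would record that $\varepsilon_0\in(0,\eta_0)$ by construction, so that, setting $\eta=\varepsilon_0$, we have $\eta\leq\eta_0$. Since $\mathscr N_0\leq\varepsilon_0$, Proposition \ref{bridge}(ii) yields $\mathscr N_T\leq\frac{\sqrt{\varepsilon_0}}{2}\leq\sqrt{\varepsilon_0}=\sqrt{\eta}$ and $\sup_{0\leq t\leq T}\|\rho\|_\infty\leq 2\bar\rho\leq 4\bar\rho$. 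Thus all three conditions ($\eta\leq\eta_0$, $\sup_t\|\rho\|_\infty\leq 4\bar\rho$, $\mathscr N_T\leq\sqrt{\eta}$) in Proposition \ref{propkey} hold, and its five displayed estimates become available.

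Next I would translate those five estimates into the asserted bounds. The first estimate gives at once $\sup_t\|\sqrt\rho E\|_2^2\leq C\|\sqrt{\rho_0}E_0\|_2^2$ and $\int_0^T(\|\nabla\theta\|_2^2+\||u|\nabla u\|_2^2)\,dt\leq C\|\sqrt{\rho_0}E_0\|_2^2$; the second gives $\sup_t\|\rho\|_3\leq C(\|\rho_0\|_3+\bar\rho^2\|\sqrt{\rho_0}u_0\|_2^2)$ and, after cubing, $\int_0^T\!\!\int\rho^3p\,dx\,dt\leq C(\|\rho_0\|_3+\bar\rho^2\|\sqrt{\rho_0}u_0\|_2^2)^3$; the third, divided by $\bar\rho^2$, gives $\sup_t\|\sqrt\rho u\|_2^2+\int_0^T\|\nabla u\|_2^2\,dt\leq C$; the fourth gives $\sup_t\|\nabla u\|_2^2\leq C(\|\nabla u_0\|_2^2+\bar\rho\|\sqrt{\rho_0}E_0\|_2^2)$ and, multiplying the $\nabla G/\sqrt{\bar\rho}$ and $\nabla\omega/\sqrt{\bar\rho}$ terms through by $\bar\rho$, the time-integrability $\int_0^T(\|\sqrt\rho u_t\|_2^2+\|\nabla G\|_2^2+\|\nabla\omega\|_2^2)\,dt\leq C$; and the fifth gives $\sup_t\|\rho\|_\infty\leq\bar\rho e^{C\mathscr N_0^{1/6}+C\mathscr N_0^{1/2}}\leq\bar\rho e^{C\varepsilon_0^{1/6}+C\varepsilon_0^{1/2}}\leq C$, using $\mathscr N_0\leq\varepsilon_0$.

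The only quantity not yet covered is $\int_0^T\|\nabla u\|_6^2\,dt$; for this I would invoke the elliptic/Sobolev bound (\ref{nu6}), namely $\|\nabla u\|_6\lesssim\|\nabla\omega\|_2+\|\nabla G\|_2+\bar\rho\|\nabla\theta\|_2$, square it, and integrate in $t$, using the $L^2_t$ bounds on $\nabla\omega$, $\nabla G$, and $\nabla\theta$ just obtained. Collecting everything and absorbing the dependence on $\bar\rho$, $\|\rho_0\|_3$, $\|\sqrt{\rho_0}u_0\|_2$, $\|\sqrt{\rho_0}E_0\|_2$, $\|\nabla u_0\|_2$, and the system parameters into a single constant $C$ gives both displayed inequalities. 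I do not expect any real obstacle here: the argument is pure bookkeeping, the one point deserving a moment's care being the verification, via Proposition \ref{bridge}(ii), that $\mathscr N_T\leq\sqrt{\eta}$ with $\eta=\varepsilon_0$ holds for \emph{all} $t\in[0,T]$ and not merely on a sub-interval, which is exactly what licenses the application of Proposition \ref{propkey}.
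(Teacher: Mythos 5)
Your proposal is correct and follows exactly the route the paper intends: the paper gives no explicit proof, declaring the corollary ``a straightforward consequence of Proposition \ref{propkey} and (ii) of Proposition \ref{bridge},'' and your argument supplies precisely that bookkeeping — verifying the hypotheses of Proposition \ref{propkey} via Proposition \ref{bridge}(ii) with $\eta=\varepsilon_0$, reading off the five estimates, and handling the one term not listed there, $\int_0^T\|\nabla u\|_6^2\,dt$, through (\ref{nu6}) together with the $L^2_t$ bounds on $\nabla\omega$, $\nabla G$, and $\nabla\theta$.
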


\section{Proof of Theorem \ref{thm}}

The following blow-up criteria is cited from Huang--Li \cite{HUANGLI13CMP}.

\begin{proposition}
  \label{CRITERIA}
Let $T^*<\infty$ be the maximal time of existence of
a solution $(\rho, u, \theta)$ to system (\ref{EQrho})--(\ref{EQtheta}), with initial data $(\rho_0, u_0, \theta_0)$. Then,
$$
\lim_{T\rightarrow T^*}(\|\rho\|_{L^\infty(0,T; L^\infty)}+\|u\|_{L^s(0,T;L^r)})=\infty,
$$
for any $(s,r)$ such that $\frac2s+\frac3r\leq1$ and $3<r\leq\infty$.
\end{proposition}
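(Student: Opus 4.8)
The plan is to argue by contradiction. Suppose $T^*<\infty$ is the maximal existence time and yet
$$
M:=\|\rho\|_{L^\infty(0,T^*;L^\infty)}+\|u\|_{L^s(0,T^*;L^r)}<\infty,\qquad \tfrac2s+\tfrac3r\le1,\ 3<r\le\infty.
$$
Under this hypothesis I would establish a priori bounds, uniform in $T<T^*$, on every norm appearing in the local existence class of Proposition \ref{LOCAL}; the local theory can then be reapplied at a time close to $T^*$ with uniformly controlled data, extending the solution beyond $T^*$ and contradicting maximality. Since the $L^\infty$ bound on $\rho$ is available throughout, the whole scheme reduces to propagating velocity and temperature regularity and then the density derivatives.

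The first and central estimate is the control of the first-order velocity quantity
$$
\Phi(T):=\sup_{0\le t\le T}\|\nabla u\|_2^2+\int_0^T\|\sqrt\rho\,\dot u\|_2^2\,dt,\qquad \dot u:=u_t+u\cdot\nabla u.
$$
Testing the momentum equation (\ref{EQu}) by $\dot u$ and integrating by parts produces $\int\rho|\dot u|^2$ on the left together with $\frac{d}{dt}$ of the Dirichlet-type energy, while the convective and pressure terms appear on the right. The convective contribution is the borderline term, and here is where the Serrin norm enters: estimating $\int\rho(u\cdot\nabla u)\cdot\dot u$ by $\|\sqrt\rho\,\dot u\|_2\,\|\sqrt\rho\,u\cdot\nabla u\|_2$ and routing $\|\nabla u\|$ through the effective viscous flux $G=(2\mu+\lambda)\text{div}\,u-p$ and the vorticity $\omega=\nabla\times u$ (which enjoy the elliptic bounds of (\ref{nGellpticEst})--(\ref{nu6})), I would interpolate against $\|\nabla u\|_2$ and $\|\nabla u\|_6$ so that the $L^r_x$ factor appears with exactly the power $s$ dictated by $\frac2s+\frac3r=1$. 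This makes the coefficient of $\Phi$ in the resulting differential inequality lie in $L^1_t(0,T^*)$, so Gr\"onwall closes the bound uniformly in $T$.

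Once $\Phi(T)$ is controlled, the remaining estimates follow in a cascade. The temperature equation (\ref{EQtheta}) is treated by testing with $\dot\theta:=\theta_t+u\cdot\nabla\theta$: the source $\mathcal Q(\nabla u)\sim|\nabla u|^2$ is handled because $\Phi$ already furnishes $\nabla u\in L^\infty_tL^2\cap L^2_tL^6$, giving $\sup_t\|\nabla\theta\|_2^2+\int_0^T\|\sqrt\rho\,\dot\theta\|_2^2\,dt\le C$. Feeding these back upgrades to the second-order bounds $\sup_t\|(\sqrt\rho u_t,\sqrt\rho\theta_t)\|_2$ and $(u,\theta)\in L^\infty_tD^2$. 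Finally the density derivatives are propagated from the transport structure of (\ref{EQrho}): differentiating yields a transport equation for $\nabla\rho$ whose Gr\"onwall factor is $\int_0^T\|\nabla u\|_\infty\,dt$, and $\|\nabla u\|_\infty$ is recovered from $G$ and $\omega$ by a logarithmic (Beale--Kato--Majda type) interpolation, controlled in turn by $\|\nabla G\|_2,\|\nabla\omega\|_2$, and $\|\nabla\theta\|_2$. This gives $\rho\in L^\infty_t(H^1\cap W^{1,q})$ and closes the full class.

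The main obstacle I anticipate is precisely the first step: closing $\Phi(T)$ at the scaling-critical Serrin regularity. Since $\frac2s+\frac3r=1$ is exactly the scaling-invariant threshold, there is no room to spare in the interpolation of the convective term, and one must route it through the flux--vorticity pair rather than through $\nabla u$ directly to gain the extra integrability; any crude estimate would produce a supercritical power of $\|u\|_{L^r}$ and fail to be Gr\"onwall-integrable. The secondary difficulty is the temperature coupling inside the logarithmic estimate for $\|\nabla u\|_\infty$, where the pressure term $\nabla p=R\nabla(\rho\theta)$ must be absorbed using the already-established $\|\nabla\theta\|_2$ bound without creating a circular dependence on the density gradient.
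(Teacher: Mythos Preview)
The paper does not prove this proposition at all; it is simply quoted as a known result from Huang--Li \cite{HUANGLI13CMP}, so there is no ``paper's own proof'' to compare against. Your outline is a faithful sketch of the standard strategy used in that reference (contradiction, close $\Phi(T)$ via the $G$--$\omega$ decomposition at the Serrin-critical scaling, then cascade to $\theta$ and finally to $\nabla\rho$ through a logarithmic estimate), so there is no conflict---you are essentially reconstructing the cited argument rather than deviating from anything in the present paper.
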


We are now ready to prove Theorem \ref{thm}.

\begin{proof}[\textbf{Proof of Theorem \ref{thm}}]
Let $\varepsilon_0$ and $\mathscr N_T$ be as in Proposition \ref{bridge} and assume $\mathscr N_0\leq\varepsilon_0$.
By Proposition \ref{LOCAL}, there is a unique local strong solution $(\rho, u,\theta)$ to system (\ref{EQrho})--(\ref{EQtheta}),
with initial data $(\rho_0, u_0, \theta_0)$. Extend the local solution $(\rho, u, \theta)$ to the maximal time of existence $T_\text{max}$. If $T_\text{max}=\infty$, then $(\rho, u, \theta)$ is a global solution and we are down. Assume that $T_\text{max}<\infty$.
Then, by the blow up criteria in Proposition \ref{CRITERIA}, it holds
\begin{equation}
  \lim_{T\rightarrow T_\text{max}}(\|\rho\|_{L^\infty(0,T; L^\infty)}+\|u\|_{L^4(0,T;L^6)})=\infty.
  \label{PF1}
\end{equation}
By Corollary \ref{CorKey}, it follows $\sup_{0\leq t\leq T}(\|\rho\|_\infty+\|\nabla u\|_2^2)\leq C$ which, by the Sobolev embedding inequality, gives
\begin{equation*}
\|\rho\|_{L^\infty(0,T; L^\infty)}+\|u\|_{L^4(0,T; L^6)}\leq C,
\end{equation*}
for any $T\in(0,T_\text{max})$, and for a positive constant $C$ independent of $T$. This implies
$$
\lim_{T\rightarrow T_\text{max}}(\|\rho\|_{L^\infty(0,T; L^\infty)}+\|u\|_{L^4(0,T;L^6)})\leq C<\infty,
$$
contradicting to (\ref{PF1}). Therefore, we must have $T_\text{max}=\infty$, proving Theorem \ref{thm}.
\end{proof}

\section*{Acknowledgments}
{J.Li was partly supported by start-up fund 550-8S0315 of the South China Normal University, the NSFC under 11771156 and 11871005, and the Hong Kong RGC Grant
CUHK-14302917.}
\par

\end{document}